\documentclass[a4paper,12pt]{article}
\usepackage{hyperref}
\usepackage{xcolor}
\usepackage{amssymb,amsthm,amsmath} 
\usepackage[a4paper]{geometry}
\usepackage{datetime2}
\usepackage[utf8]{inputenc}
\usepackage[italian,english]{babel}
\usepackage{mathrsfs}
\usepackage{bbm}
\usepackage{enumitem}

\numberwithin{equation}{section}

\newcommand{\R}{\mathbb{R}}
\newcommand{\Z}{\mathbb{Z}}
\newcommand{\s}{\mathbb{S}}

\newcommand{\D}{\mathcal{D}}

\newcommand{\N}{\mathbb{N}}
\newcommand{\ep}{\varepsilon}

\newcommand{\Fgpl}{F_{\gamma,p,\lambda}}
\newcommand{\Egpl}{E_{\gamma,p,\lambda}}
\newcommand{\psigpl}{\psi_{\gamma,p,\lambda}}
\newcommand{\DFgpl}{DF_{\gamma,p,\lambda}}

\newcommand{\Vis}{\mathcal{V}}
\newcommand{\Imp}{\mathcal{I}}

\newcommand{\diam}{\operatorname{diam}}
\newcommand{\loc}{\mathrm{loc}}

\newcommand{\Leb}{\ensuremath{\mathscr{L}}}
\DeclareMathOperator{\osc}{osc}

\newtheorem{thm}{Theorem}[section]

\newtheorem{rmk}[thm]{Remark}
\newtheorem{prop}[thm]{Proposition}

\newtheorem{cor}[thm]{Corollary}

\newtheorem{lemma}[thm]{Lemma}
\newtheorem{open}{Open problem}

\title{Lower bounds and Poincaré inequality for the Brezis-Seeger-Van Schaftingen-Yung functionals}

\author{Nicola Picenni\vspace{1ex}\\ 
{\normalsize Università degli Studi di Roma Tor Vergata} \\
{\normalsize Dipartimento di Matematica}\\ 
{\normalsize Via della Ricerca Scientifica, 1 - 00133 Roma (Italy)}\\  
{\normalsize e-mail: \texttt{picenni@mat.uniroma2.it}}
}
\date{}

\begin{document}

\maketitle

\begin{abstract}

We consider a family of non-local and non-convex functionals depending on three real parameters $(\gamma,p,\lambda)$, which was introduced by H.~Brezis, A.~Seeger, J.~Van~Schaftingen and P.~L.~Yung. In the case $\gamma<-p$ we prove a lower bound for the Gamma-liminf of these functionals as $\lambda\to 0^+$ and a Poincaré-type inequality. We also show that such inequality cannot hold if $\gamma>0$, and we establish a modified version in this case.

These results answer some open questions posed by these authors and by H.~M.~Nguyen.

\vspace{6ex}

\noindent{\bf Mathematics Subject Classification (MSC2020):} 49J45, 26D10, 46E35, 26B30.

\vspace{6ex}


\noindent{\bf Key words:} 
Sobolev spaces, functions of bounded variation, non-local functionals, Gamma-convergence, non-local Poincaré inequality.

\end{abstract}

\thispagestyle{empty}
\setcounter{page}{0}

\clearpage

\section{Introduction}

In this paper we address some questions concerning a class of non-local, non-convex functionals introduced by H.~Brezis, A.~Seeger, J.~Van Schaftingen and P.-L.~Yung in~\cite{2022-BSVY-Lincei,BSVY}, which is related to the Sobolev norm and the total variation.

In order to introduce these functionals, we consider a positive integer $N$, an open set $\Omega \subseteq \R^N$, and three real parameters $\gamma\in\R$, $p\geq 1$ and $\lambda>0$.

Then we define 
\begin{equation}\label{defn:Fgpl}
\Fgpl(u,\Omega):=\lambda^p \nu_\gamma(\Egpl(u,\Omega))
\qquad
\forall u\in L^1_\loc(\Omega),
\end{equation}
where $\nu_\gamma$ is the measure on $\R^N\times \R^N$ defined by
\begin{equation}\label{defn:nu_gamma}
\nu_\gamma (A):=\iint_A |y-x|^{\gamma-N}\,dx\,dy,
\end{equation}
and the set $\Egpl(u,\Omega)$ is the set defined as follows
\begin{equation}\label{defn:Egpl}
\Egpl(u,\Omega):=\left\{(x,y)\in \Omega\times\Omega:|u(y)-u(x)|>\lambda |y-x|^{1+\gamma/p}\right\}.
\end{equation}

Prior to \cite{2022-BSVY-Lincei,BSVY}, the functionals (\ref{defn:Fgpl}) had already been considered in the special case $\gamma=-p$ (see \cite{2006-Nguyen-JFA,2006-CRAS-BN,2011-Nguyen-Duke,2011-CalcVar-Nguyen}), and in the special case $\gamma=N$ (see \cite{2021-BVY-PNAS,2021-BVY-CalcVar,2022-Poliakovsky-JFA}). Note that these cases are exactly those for which either the exponent in (\ref{defn:Egpl}) or the one in (\ref{defn:nu_gamma}) vanishes.

In order to state describe the known results involving the functionals (\ref{defn:Fgpl}), let us introduce some convenient notation. Let $\dot{W}^{1,p}(\Omega)$ denote the space of functions $u\in L^1 _{\loc}(\Omega)$ whose distributional gradient $\nabla u$ belongs to $L^p(\Omega)$, and let $\dot{BV}(\Omega)$ denote the space of functions $u\in L^1 _{\loc}(\Omega)$ whose distributional gradient $Du$ is a finite $\R^N$-valued Radon measure on $\Omega$. Then for every $u\in L^1 _{\loc}(\Omega)$ we set
\begin{equation}
F_p(u,\Omega):=\begin{cases}
\|\nabla u\|_{L^p(\Omega)} ^p & \mbox{if $p>1$ and $u\in \dot{W}^{1,p}(\Omega)$},
\\[0.5ex]
|Du|(\Omega) & \text{if $p=1$ and $u\in \dot{BV}(\Omega)$},
\\
+\infty & \text{otherwise}.
\end{cases}
\label{defn:Fp}
\end{equation}

Let us define also the geometric constant 
\begin{equation}\label{defn:C_N}
C_{N,p}:=\int_{\s^{N-1}} |\langle v,x\rangle|^p\,d\mathcal{H}^{N-1}(x),
\end{equation}
where $v\in \R^N$ is any unit vector.

\subsection{Previous literature}

Let us summarize some of the main results concerning (\ref{defn:Fgpl}) that are of interest for us. We divide the cases $\gamma>0$ and $\gamma<0$, since they present some differences, and we discuss separately the case $\gamma=-p$, in which more precise results are available.

We also remark that the case $\gamma=0$ is quite pathological and presents no relevant limits, so we exclude it in the sequel and we refer to \cite[Section~5]{BSVY} for more details.

Moreover, for simplicity, we state the convergence results only in the case $\Omega=\R^N$, even though most of them have been proved also in local versions.

Before listing the results, let us remark that the relevant limit is the one for $\lambda\to +\infty$ when $\gamma>0$ and for $\lambda\to 0^+$ when $\gamma<0$ (see the remark after Theorem~1.1 in \cite{BSVY}).

\paragraph{The case $\gamma=-p$.}

This case was the first one to be considered and it is now well understood. We recall just a few clean results, but many generalizations are available. We mention some of them at the end of the present subsection.

\begin{itemize}
    \item (Pointwise convergence \cite{2006-Nguyen-JFA}) For every $p>1$ it turns out that
    $$\lim_{\lambda\to 0^+} F_{-p,p,\lambda}(u,\R^N)=
    \frac{C_{N,p}}{p}\cdot F_p(u,\R^N) 
    \qquad 
    \forall u \in L^p(\R^N).$$

    In the case $p=1$, the analogous limit holds true when $u\in C^1_c(\R^N)$, but there exists a function $u\in W^{1,1}(\R^N)$ for which
    $$\lim_{\lambda\to 0^+} F_{-1,1,\lambda}(u,\R^N)= +\infty.$$

    \item (Gamma-convergence \cite{2011-Nguyen-Duke,2020-APDE-AGMP}) For every $p\geq 1$ it turns out that
    $$\Gamma\mbox{--}\lim_{\lambda\to 0^+} F_{-p,p,\lambda}(u,\R^N)=
    \frac{C_{N,p}}{p}\cdot k_{p}\cdot F_p(u,\R^N) 
    \qquad \forall u \in L^p(\R^N),$$
    where
    \begin{equation*}
    k_p:=\begin{cases}
    \dfrac{1}{p-1}\left(1-\dfrac{1}{2^{p-1}}\right) &\text{if $p>1$},
    \\[1.5ex]
    \log 2 &\text{if $p=1$}.
    \end{cases}
\end{equation*}
    Note that the Gamma-limit is strictly smaller than the pointwise limit.

    \item (Poincaré inequality \cite{2011-CalcVar-Nguyen}) Let us assume that $\Omega$ is a ball. Then for every $p\geq 1$ there exists a constant $K_{N,p}>0$, depending only on $N$ and $p$, such that
    \begin{equation}\label{nguyen-poincaré}
    \iint_{\Omega\times \Omega} |u(y)-u(x)|^p\,dxdy \leq K_{N,p}\left(|\Omega|^{\frac{N+p}{N}} F_{-p,p,\lambda}(u,\Omega) + \lambda ^p |\Omega|^2 \right),
    \end{equation}
    for every $u\in L^p(\Omega)$ and every $\lambda>0$.
\end{itemize}

\paragraph{Previous literature in the case $\gamma>0$.}

More recently, the case $\gamma=N$ (so that $\nu_N$ coincides with the Lebesgue measure) was considered in \cite{2021-BVY-PNAS,2021-BVY-CalcVar,2022-Poliakovsky-JFA} (see also \cite[Section~9]{Brezis-Lincei}). Shortly after the publication of these works, the general case $\gamma\in \R$ was introduced in \cite{2022-BSVY-Lincei,BSVY}. Since, to our knowledge, by now there are not results that are specific to the case $\gamma=N$, we consider directly the more general case $\gamma>0$.

\begin{itemize}
\item (Supremum estimate from above and below \cite{BSVY}) For every $p\geq 1$ and $\gamma>0$ there exist two positive constants $c_1(N,\gamma,p)$ and $c_2(N,\gamma,p)$ such that
\begin{equation*}
c_1(N,\gamma,p)F_p(u,\R^N)\leq \sup_{\lambda>0} \Fgpl(u,\R^N)\leq c_2(N,\gamma,p)F_p(u,\R^N) \quad \forall u\in L^1 _{\loc}(\R^N).
\end{equation*}

\item  (Pointwise convergence for $p>1$ \cite{BSVY,GP-gammaliminf}) For every $p> 1$ and $\gamma>0$ it holds that
\begin{equation}\label{lim_W1p}
\lim_{\lambda\to +\infty} \Fgpl(u,\R^N)=
C_{N,p}\cdot\frac{1}{\gamma}\cdot F_p(u,\R^N) 
\qquad 
\forall u\in L^1 _{\loc}(\R^N).
\end{equation}

\item  (Pointwise convergence for $p=1$) In the case $p=1$, the pointwise convergence is more delicate. Combining results from \cite{2023-Picenni,2023-Lahti,GP-gammaliminf} we now know that
\begin{equation}\label{liminf_BV_2}
\liminf_{\lambda\to +\infty} F_{\gamma,1,\lambda}(u,\R^N) \geq C_{N,1}\left\{\frac{1}{\gamma} |D^a u|(\R^N) + \frac{1}{\gamma+1} \left(|D^j u|(\R^N) + |D^c u|(\R^N)\right)\right\},
\end{equation}
for every $\gamma>0$ and $u\in L^1_{\loc}(\R^N)$, where $D^a u$, $D^j u$ and $D^c u$ denote, respectively, the absolutely continuous part, the jump part and the Cantor part of the distributional derivative of $u$. Moreover, it holds that
\begin{equation*}
\lim_{\lambda\to +\infty} F_{\gamma,1,\lambda}(u,\R^N) = 
C_{N,1}\left\{\frac{1}{\gamma} |D^a u|(\R^N) + \frac{1}{\gamma+1} |D^j u|(\R^N)\right\}
\end{equation*}
for every $u\in \dot{SBV}(\R^N)$, namely for $u\in \dot{BV}(\R^N)$ with $D^c u=0$. However, the pointwise limit might not exist for a general $u\in \dot{BV}(\R^N)$, as it was shown in \cite[Example~3.22]{2023-Lahti}.

\item (Gamma-convergence \cite{GP-gammaliminf,arxiv-gammalim})
For every $p\geq 1$ and $\gamma>0$ there exists a constant
\begin{equation}\label{gp_est}
k(N,\gamma,p)\geq C_{N,p}\cdot \frac{\log 2} {2^{1+\gamma}-1}
\end{equation}
such that
\begin{equation}\label{gammalim-implicit}
\Gamma-\lim_{\lambda\to +\infty} \Fgpl(u,\R^N)= k(N,\gamma,p) \cdot F_p(u,\R^N) \qquad \forall u\in L^1_{\loc}(\R^N).
\end{equation}

Unfortunately, the constant $k(N,\gamma,p)$ is defined in \cite{arxiv-gammalim} in an implicit way as the solution of a cell problem, so we still do not know if the Gamma-limit coincides with the pointwise limit (or the relaxation of the pointwise liminf when $p=1$), or if a gap phenomenon occurs as in the case $\gamma=-p$. In fact, the estimate (\ref{gp_est}), which comes from \cite{GP-gammaliminf}, is probably not sharp, but the proof of (\ref{gammalim-implicit}) in \cite{arxiv-gammalim} gives no information about this question.

\item (Gamma-convergence with respect to the area-strict convergence \cite{Lahti-Li}) If $p=1$, then for every $\gamma>0$ it holds that
\begin{multline*}
\Gamma_{AS}-\lim_{\lambda\to +\infty} F_{\gamma,1,\lambda}(u,\R^N) \\
= C_{N,1}\left\{\frac{1}{\gamma} |D^a u|(\R^N) + \frac{1}{\gamma+1} \left(|D^j u|(\R^N) + |D^c u|(\R^N)\right)\right\},
\end{multline*}
where the Gamma-limit in the left-hand side is intended with respect to the area-strict convergence, instead of the usual $L^1 _{\loc}$ topology. We refer to \cite{Lahti-Li} for the definition of area-strict convergence and more details on this statement.

\end{itemize}

\paragraph{Previous literature in the case $\gamma<0$.}

As opposite to the case $\gamma>0$, much less is known in the case $\gamma<0$, outside of the special case $\gamma=-p$. This might be due to the fact that the case $\gamma=N$ appeared in Brezis' list of open problems \cite{Brezis-Lincei}, which attracted the attention of many researchers, but also to the fact that the negative case exhibits some additional complications. In fact, there are some additional relevant thresholds at $\gamma=-p$, due to the change of sign in the exponent appearing in (\ref{defn:Egpl}), and at $\gamma=-1$, which separates the regimes for which jump-type discontinuities cost infinite energy or not.

Anyway, let us summarize the state of the art in this setting.

\begin{itemize}
    \item (Supremum estimate from above and below \cite{BSVY}) For every $p>1$ and $\gamma<0$ there exist two positive constants $c_1(N,\gamma,p)$ and $c_2(N,\gamma,p)$ such that
\begin{equation*}
c_1(N,\gamma,p)F_p(u,\R^N)\leq \sup_{\lambda>0} \Fgpl(u,\R^N)\leq c_2(N,\gamma,p) F_p(u,\R^N)\quad \forall u\in L^1 _{\loc}(\R^N).
\end{equation*}

If $p=1$ the statement above holds if and only if $\gamma<-1$.

    \item (Lower bound when $\gamma\in [-p,-1]$ \cite{Nguyen-survey}) For every $p \geq 1$ and $\gamma\in [-p,-1]$ there exists a constant $c(N,p)>0$, depending only on $N$ and $p$, such that
    \begin{equation}\label{liminf_est_nguyen}
    \liminf_{\lambda\to 0^+} \Fgpl(u,\R^N) \geq c(N,p)\cdot F_p(u,\R^N)\qquad \forall u\in L^1 _{\loc}(\R^N).
    \end{equation}

    \item (Pointwise convergence on Sobolev spaces \cite{BSVY}) For every $p>1$ and $\gamma<0$ it holds that
    \begin{equation}\label{lim-sobolev}
    \lim_{\lambda\to 0^+} \Fgpl(u,\R^N)=
C_{N,p}\cdot\frac{1}{|\gamma|}\cdot F_p(u,\R^N)  \qquad \forall u\in \dot{W}^{1,p} (\R^N).
    \end{equation}

If $p=1$ the statement above holds only if $\gamma<-1$, or if $u\in C^1 _c(\R^N)$, but it fails generically for $\gamma\in [-1,0)$ and $u\in \dot{W}^{1,1}(\R^N)$ (see \cite[Theorem~1.8]{BSVY} for precise statements about the counterexamples). Moreover, in the case $p=1$, there are not known formulas nor estimates for the $\dot{BV}$ and $\dot{SBV}$ cases.

We point out that for $\gamma\in [-p,-1]$ the pointwise limit formula (\ref{lim-sobolev}) extends to all $u\in L^{1}_{\loc}(\R^N)$ thanks to (\ref{liminf_est_nguyen}).

\item (Trivial Gamma-limit when $\gamma\in (-1,0)$ \cite{Nguyen-survey}) For every $p\geq 1$ and $\gamma \in (-1,0)$ it holds that
$$\Gamma-\lim_{\lambda\to 0^+} \Fgpl(u,\R^N)=0 \qquad \forall u\in L^1 _{\loc}(\R^N).$$

This is due to the fact that, in this case, it holds that
\begin{equation}\label{null-limit-pc}
\lim_{\lambda\to 0^+} \Fgpl(w,\R^N)=0
\end{equation}
whenever $w:\R^N \to \R$ is a piecewise constant function with compact support.
\end{itemize}

We conclude our overview of the literature by mentioning that extensions of some of the above results in various directions, including applications to image processing, approximations of more general functionals, or in more general settings, can be found in \cite{2018-CRAS-AGMP,2020-APDE-AGP,BP-metric,2018-AnnPDE-BN,2020-CCM-BN,CLL-Heisenberg,max_funct_metric,DS-metric,DM-operators,HLYY-high_order,weak-type_metric,NS-anisotropic,NPSV-magnetic,2023-CalcVar-ZYY,2023-CCM-ZYY}.

\subsection{New results}

The first aim of this paper is to fill some of the gaps in the case $\gamma<-p$.

Indeed, our first main result is the extension of the Gamma-liminf estimate obtained in \cite{GP-gammaliminf} to this regime, that correspond to the following statement.

\begin{thm}[Gamma-liminf estimate for $\gamma<-p$]\label{thm:gamma-liminf}
Let $p\geq 1$ and $\gamma<-p$ be real numbers, let $N$ be a positive integer, and let $\Omega\subseteq \R^N$ be any open set. 

Then it holds that
\begin{equation}
\Gamma\mbox{--}\liminf_{\lambda\to 0^+} \Fgpl(u,\Omega) \geq 
C_{N,p}\cdot 2^{\gamma}\log 2 \cdot F_p(u,\Omega) 
\qquad 
\forall u \in L^1 _{\loc} (\Omega),
\end{equation}
where $C_{N,p}$ is the geometric constant defined by (\ref{defn:C_N}).

More precisely, for every $u\in L^1 _{\loc}(\Omega)$ and every family of functions $\{u_\lambda\}\subseteq L^1_{\loc}(\Omega)$ such that $u_\lambda \to u$ in $L^1 _{\loc}(\Omega)$ as $\lambda \to 0^+$, it holds that
\begin{equation}\label{th:liminf_ineq}
\liminf_{\lambda\to 0^+} \Fgpl(u_\lambda,\Omega) \geq 
C_{N,p}\cdot 2^\gamma \log2 \cdot F_p(u,\Omega).
\end{equation}
\end{thm}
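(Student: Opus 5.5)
We follow the strategy of \cite{GP-gammaliminf}, which proves the analogous estimate for $\gamma>0$. The plan has four steps: reduce to one dimension by slicing, obtain a one-dimensional liminf estimate through a dyadic-scale argument, integrate over directions, and lower-bound the liminf of the directional total variations by a lower semicontinuity argument.

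\textbf{Slicing.} Write $y=x+t\sigma$ with $\sigma\in\s^{N-1}$ and $t>0$, counting each unordered pair twice. Then
\begin{equation*}
\nu_\gamma(\Egpl(u,\Omega))=\int_{\s^{N-1}}\int_{\Omega} \int_0^{\infty}\mathbbm{1}_{\{|u(x+t\sigma)-u(x)|>\lambda t^{1+\gamma/p}\}}\, t^{\gamma-1}\,dt\,dx\,d\mathcal{H}^{N-1}(\sigma),
\end{equation*}
where we restrict to $x+t\sigma\in\Omega$. For fixed $\sigma$, decompose $\Omega$ into lines parallel to $\sigma$. By Fatou's lemma, it suffices to prove a one-dimensional estimate on each open interval $I$ and then integrate. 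The one-dimensional estimate reads: if $v_\lambda\to v$ in $L^1_{\loc}(I)$, then
\begin{equation*}
\liminf_{\lambda\to0^+}\lambda^p\iint_{\{x<y,\ |v_\lambda(y)-v_\lambda(x)|>\lambda (y-x)^{1+\gamma/p}\}}(y-x)^{\gamma-1}\,dx\,dy \;\geq\; 2^{\gamma}\log 2\cdot \Phi_p(v,I).
\end{equation*}
Here $\Phi_p$ is the one-dimensional analogue of $F_p$: $\int_I|v'|^p$ if $p>1$ and $|Dv|(I)$ if $p=1$.

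\textbf{The one-dimensional estimate.} This is the main step. Since $\gamma<-p$, the threshold $\lambda t^{1+\gamma/p}$ is large at small $t$, and it reaches size $\sim 1$ at the scale $t\sim\lambda^{p/|\gamma+p|}$. The key device is a dyadic doubling argument. Fix $x$ and $h$. Consider the three points $x$, $x+h$ and $x+2h$. If neither of the two short pairs belongs to the set $E$, then
\begin{equation*}
|v(x+2h)-v(x)|\le 2\lambda h^{1+\gamma/p}=2^{-\gamma/p}\,\lambda (2h)^{1+\gamma/p}.
\end{equation*}
Since $-\gamma/p>1$, this bound does not by itself place the long pair outside $E$. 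It does, however, let us count the long pairs $(x,x+2h)$ in $E$ through pairs at scale $h$. Iterating over dyadic scales $h,2h,\dots,2^kh$ and averaging in $x$ gives a bound of the following form. For a step-function-like oscillation $\delta$ across a segment of length $L$, the $\nu_\gamma$-measure of the pairs in $E$, multiplied by $\lambda^p$, is bounded below by $2^{\gamma}\log 2\cdot |\delta|^p L^{1-p}$. The factor $\log 2$ comes from integrating $t^{\gamma-1}\cdot t^{-\gamma}\,dt=dt/t$ over one dyadic band $[h,2h]$. The factor $2^\gamma$ comes from evaluating the weight at the endpoint $2h$ of the band instead of at $h$.

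Concretely, we would first prove a discrete lemma. Let $v$ be a function on $[0,L]$ with $|v(L)-v(0)|=\delta$. Then the measure of the set $\{(x,t): t\in[h,2h],\ |v(x+t)-v(x)|>\lambda t^{1+\gamma/p}\}$, weighted by $t^{\gamma-1}$, controls $\delta^p$ at the scale $h$ matched to $\delta$, namely $\lambda h^{1+\gamma/p}\approx \delta\,(h/L)$. We would then sum this over a partition of $I$ into small intervals. Convexity of $s\mapsto s^p$ (Jensen's inequality) yields $\int|v'|^p$ in the limit, and for $p=1$ it yields the total variation. We then pass to the limit using only $L^1_{\loc}$ convergence: on a fixed finite partition, the increments of the mollified averages of $v_\lambda$ converge to those of $v$. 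Finally, we take the supremum over partitions, which recovers $\Phi_p(v,I)$ by lower semicontinuity. We expect the hardest point to be matching the scale $h$ to $\lambda$ uniformly. We would choose $h=h(\lambda)\to0$ so that the threshold at scale $h$ equals the average slope times $h$. The case $\gamma<-p$ guarantees that such an $h\to0$ exists.

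\textbf{Integration over directions and conclusion.} After integrating the one-dimensional bound over lines, we get
\begin{equation*}
\liminf_{\lambda\to0^+}\Fgpl(u_\lambda,\Omega)\ge 2^\gamma\log2\int_{\s^{N-1}}\Phi_p^{\sigma}(u)\,d\mathcal{H}^{N-1}(\sigma),
\end{equation*}
where $\Phi_p^\sigma(u)$ denotes the directional energy $\int|\langle\nabla u,\sigma\rangle|^p$, or $|\langle Du,\sigma\rangle|(\Omega)$ when $p=1$. Here we need the liminf to pass inside the integral over $\sigma$, which Fatou's lemma provides. We also need, for each $\sigma$, the liminf of the one-dimensional energies of the slices of $u_\lambda$ to dominate the energy of $u$. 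This holds because $u_\lambda\to u$ in $L^1_{\loc}$ implies convergence of almost every slice along a subsequence. Finally, the standard identity $\int_{\s^{N-1}}|\langle\xi,\sigma\rangle|^p\,d\mathcal{H}^{N-1}(\sigma)=C_{N,p}|\xi|^p$ gives $C_{N,p}F_p(u,\Omega)$. If the right-hand side is finite but $u$ is not in $\dot W^{1,p}$ (respectively $\dot{BV}$), the same directional bounds show that $u$ lies in that space, so the statement also covers the case where $F_p(u,\Omega)=+\infty$.
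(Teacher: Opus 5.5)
Your slicing step and constant bookkeeping are fine. A one-sided one-dimensional bound $2^\gamma\log2\cdot\Phi_p$, integrated over lines and directions with $\int_{\s^{N-1}}|\langle\xi,\sigma\rangle|^p\,d\mathcal{H}^{N-1}(\sigma)=C_{N,p}|\xi|^p$, gives exactly the stated constant.

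\textbf{The gap.} The gap is in the one-dimensional lower bound, which is the whole content of the theorem: the ``discrete lemma'' you propose is false. You want the pairs with $t\in[h,2h]$, for a single scale $h=h(\lambda)$ fixed a priori by $\lambda h^{1+\gamma/p}\approx\delta h/L$, to control $\delta^pL^{1-p}$ for an arbitrary $v$ on $[0,L]$ with $|v(L)-v(0)|=\delta$. Take $v=0$ on $[0,L/2)$ and $v=\delta$ on $[L/2,L]$. For $t\in[h,2h]$ the only pairs in $E$ are those straddling $L/2$, since there $\delta>\lambda h^{1+\gamma/p}\geq\lambda t^{1+\gamma/p}$. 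So the band contributes $\lambda^p\int_h^{2h}t\cdot t^{\gamma-1}\,dt\approx\lambda^ph^{1+\gamma}\approx\delta^pL^{1-p}\cdot(h/L)$, which is $o(\delta^pL^{1-p})$ as $h\to0$.

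This is not an exotic competitor. For $p=1$ a jump of size $J$ has energy about $J/|1+\gamma|$, concentrated at scales comparable to $(\lambda/J)^{1/|1+\gamma|}$. That is much smaller than your $h\approx(\lambda L/\delta)^{1/|\gamma|}$, and staircase-like $u_\lambda$ are exactly what a Gamma-liminf must handle. So the scales at which energy is counted must be chosen depending on the function. Summing over several bands does not help unless you say which bands carry the energy and how they compare with $\delta^pL^{1-p}$. Your ``doubling'' remark (two non-visible halves bound the increment of the whole) is the right seed, but you draw no usable counting statement from it.

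\textbf{The missing idea.} What is missing is the stopping-time selection of Proposition~\ref{prop:est_DF}. After truncating $v$ to $[A,B]$, the assumption $\gamma<-p$ makes all sufficiently short dyadic intervals non-visible, while $[\alpha,\beta]$ is visible. Hence the maximal non-visible dyadic subintervals (the important ones, whose parent is visible) form a finite, essentially disjoint cover of $[\alpha,\beta]$. The argument then runs in three steps.
\begin{enumerate}
\item The triangle inequality over this cover gives $B-A\leq\sum_k\lambda|\Imp_k|(\delta2^{-k})^{1+\gamma/p}$.
\item Hölder's inequality, with $\sum_k|\Imp_k|2^{-k}=\beta-\alpha$, gives $(B-A)^p\leq\lambda^p\delta^{p+\gamma}(\beta-\alpha)^{p-1}\sum_k|\Imp_k|2^{-k(1+\gamma)}$.
\item The count $|\Vis_{k-1}|\geq\frac12|\Imp_k|$ converts important intervals into visible parents of twice the length. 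This is where $2^\gamma=2^{1+\gamma}/2$ comes from, not from evaluating a weight at the end of a band.
\end{enumerate}
This bound is transferred to $\Fgpl$ by averaging over all grids $x+\delta2^{-k}\Z$ with $\delta\in[1/2,1]$ (Proposition~\ref{prop:dyadic_repr}), which produces $\int_{1/2}^1d\delta/\delta=\log2$.

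\textbf{A secondary issue.} Increments of mollified averages of $v_\lambda$ are not what the lemma needs, because the non-convex functional is evaluated on $v_\lambda$ itself. The paper instead truncates $u_\lambda$ to $[u(c)+\ep,u(d)-\ep]$ at Lebesgue points $c<d$. The truncation then equals $A$, respectively $B$, on most of small neighbourhoods of $c$ and $d$, and averaging over grid offsets costs only a factor $1-\ep$.
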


We point out that, as in \cite{GP-gammaliminf}, the statement of Theorem~\ref{thm:gamma-liminf} remains true if one replaces $L^1 _{\loc}$ convergence with other notions (see Remark~\ref{rmk:convergence}).

Moreover, by taking the trivial family $u_\lambda \equiv u$, we immediately get the following corollary, which extends (\ref{lim-sobolev}) to all $u\in L^1 _{\loc}(\R^N)$ also in the case $\gamma<-p$.

\begin{cor}\label{cor:liminf}
For every $p\geq 1$ and $\gamma<-p$, and for every open set $\Omega \subset \R^N$ it holds that
$$\liminf_{\lambda\to 0^+} \Fgpl(u,\Omega) \geq 
C_{N,p}\cdot 2^\gamma \log 2 \cdot F_p(u,\Omega)\qquad \forall u \in L^1 _{\loc} (\Omega).$$
\end{cor}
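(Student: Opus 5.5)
The plan is to derive Corollary~\ref{cor:liminf} directly from Theorem~\ref{thm:gamma-liminf}, specialising the family $\{u_\lambda\}$ to the trivial one. Fix $p\geq 1$, $\gamma<-p$, an open set $\Omega\subseteq\R^N$ and $u\in L^1_{\loc}(\Omega)$. Set $u_\lambda:=u$ for every $\lambda>0$. This family trivially satisfies $u_\lambda\to u$ in $L^1_{\loc}(\Omega)$ as $\lambda\to 0^+$, since $\|u_\lambda-u\|_{L^1(K)}=0$ for every compact $K\subset\Omega$. Hence it is admissible in the second part of Theorem~\ref{thm:gamma-liminf}.

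Applying (\ref{th:liminf_ineq}) to this family gives exactly
$$\liminf_{\lambda\to 0^+}\Fgpl(u,\Omega)=\liminf_{\lambda\to 0^+}\Fgpl(u_\lambda,\Omega)\geq C_{N,p}\cdot 2^{\gamma}\log 2\cdot F_p(u,\Omega),$$
which is the claim. No case distinction is needed. If $u\notin\dot{W}^{1,p}(\Omega)$ (respectively $u\notin\dot{BV}(\Omega)$ when $p=1$), then $F_p(u,\Omega)=+\infty$ by (\ref{defn:Fp}), and the inequality asserts that the liminf is $+\infty$. This is still exactly what the theorem yields, because the theorem is stated for every $u\in L^1_{\loc}(\Omega)$ with values in $[0,+\infty]$.

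There is no real obstacle in this step: all the analytic content (the lower bound with the constant $2^\gamma\log 2$ and the treatment of the regime $\gamma<-p$) is contained in Theorem~\ref{thm:gamma-liminf}. The only point to check is that the Gamma-liminf statement is formulated for arbitrary families converging in $L^1_{\loc}(\Omega)$, constant families included, and not merely as an abstract relaxation inequality. The second part of the theorem states precisely this, so the corollary follows at once.
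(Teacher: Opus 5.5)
Your proposal is correct and is the paper's own argument: the corollary follows by taking the constant family $u_\lambda\equiv u$ in the second part of Theorem~\ref{thm:gamma-liminf}. Your remark that the case $F_p(u,\Omega)=+\infty$ needs no separate treatment is also accurate.
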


This settles the only remaining case in the open problems stated in \cite[Section~7.2]{BSVY} and \cite[Open question~14]{Nguyen-survey}. Indeed, combining Corollary~\ref{cor:liminf} with the results collected in the previous section, we now know that for every $p\geq 1$ and every $\gamma\in\R\setminus (-1,0]$ there is some positive constant $c_0(N,\gamma,p)>0$ such that
\begin{gather*}
\liminf_{\lambda\to +\infty} \Fgpl(u,\R^N) \geq c_0(N,\gamma,p)\cdot F_p(u,\R^N)\qquad \text{if }\gamma>0,\\
\liminf_{\lambda\to 0^+} \Fgpl(u,\R^N) \geq c_0(N,\gamma,p)\cdot F_p(u,\R^N)\qquad \text{if }\gamma\leq -1,
\end{gather*}
for every $u\in L^1_{\loc}(\R^N)$. On the other hand, for every $\gamma\in (-1,0)$ such statement fails because of (\ref{null-limit-pc}).

\smallskip

Our second goal is to investigate the validity of a Poincaré-type inequality like (\ref{nguyen-poincaré}) in the more general case $\gamma\in \R \setminus (-1,0]$, which is a question posed by H.~M.~Nguyen in \cite[Open question~33]{Nguyen-survey}. In the case $\gamma<-p$, we establish such inequality in every bounded convex domain.

\begin{thm}[Poincaré inequality for $\gamma<-p$] \label{thm:poincaré_<-p}
Let $p\geq 1$ and $\gamma<-p$ be real numbers, let $N$ be a positive integer, and let $\Omega\subseteq \R^N$ be a bounded and convex open set.

Then there exists a positive constant $K(\Omega,N,p,\gamma)>0$ such that
\begin{equation}\label{th:poincaré_<-p}
\iint_{\Omega \times \Omega} |u(y)-u(x)|^p\,dxdy \leq K(\Omega,N,\gamma,p)\cdot \left(\Fgpl(u,\Omega) + \lambda ^p \right),    
\end{equation}
for every $u\in L^p(\Omega)$ and every $\lambda>0$.
\end{thm}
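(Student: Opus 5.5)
The plan is to reduce (\ref{th:poincaré_<-p}) to a weak-type (distribution function) estimate for the differences $u(y)-u(x)$, and to prove that estimate by a chain-of-balls argument that exploits the sign of the exponents when $\gamma<-p$. Set $d:=\diam\Omega$ and $a:=-(1+\gamma/p)>0$. The threshold in (\ref{defn:Egpl}) is then $\lambda|y-x|^{-a}$: it is large for close pairs and small for distant ones. The weight $|y-x|^{\gamma-N}$ is very large near the diagonal, so close pairs in $\Egpl(u,\Omega)$ are heavily charged by $\nu_\gamma$.

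\emph{Step 1 (layer-cake reduction).} Write
\[
\iint_{\Omega\times\Omega}|u(y)-u(x)|^p\,dxdy=p\int_0^\infty s^{p-1}\,\Leb^{2N}(\{|u(y)-u(x)|>s\})\,ds .
\]
For $s\le s_0\lambda$, with $s_0$ a constant depending on $d,N,\gamma,p$, the measure factor is at most $|\Omega|^2$. This range therefore contributes $C\lambda^p$. It remains to prove, for $s\ge s_0\lambda$, a decay estimate
\begin{equation*}
\Leb^{2N}(\{(x,y)\in\Omega^2:|u(y)-u(x)|>s\})\le C\,\Big(\frac{\lambda}{s}\Big)^{p+\delta}\,\nu_\gamma(\Egpl(u,\Omega))
\end{equation*}
for some $\delta>0$. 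Integrating against $s^{p-1}$ and using $\lambda^p\nu_\gamma(\Egpl)=\Fgpl$ then gives $C\Fgpl(u,\Omega)$.

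\emph{Step 2 (dyadic chains, using convexity).} Fix $(x,y)$ with $|u(y)-u(x)|>s$. Connect $x$ to $y$ by a chain of balls $B_j$. Starting from a ball of radius about $d$ in the middle, they shrink dyadically toward $x$ and toward $y$, with radii $r_j=2^{-j}d$. Convexity guarantees that these balls, suitably shrunk, stay inside $\Omega$ with volume comparable to $r_j^N$. Telescoping, at $\Leb$-a.e.\ Lebesgue point, with averaged intermediate points $z_j\in B_j$, gives $|u(y)-u(x)|\le\sum_j|u(z_j)-u(z_{j+1})|$. So for some $j$ the consecutive pair satisfies $|u(z_j)-u(z_{j+1})|>c_\varepsilon s\,2^{-\varepsilon j}$, where $\varepsilon>0$ is small. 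I only use steps with $j\le J(s)$, where $J(s)$ is the largest $j$ such that $c_\varepsilon s2^{-\varepsilon j}>\lambda r_j^{-a}$. This holds as long as $2^{j(a-\varepsilon)}\lesssim s/\lambda$. For such $j$, every pair in the bad event at scale $r_j$ lies in $\Egpl(u,\Omega)$. The tail $j>J(s)$ is handled by the Lebesgue-point limit: the portion of $s$ left for those steps can be assigned to the terms $j\le J$ by adjusting constants. A Fubini/averaging argument then bounds the Lebesgue measure of bad $(x,y)$ by
\[
\sum_{j\le J(s)} C\,r_j^{-N}\,\Leb^{2N}\big(\Egpl\cap\{|z-w|\sim r_j\}\big)
\le \sum_{j\le J(s)}C\,r_j^{-\gamma}\,\nu_\gamma(\Egpl).
\]
This uses $\Leb^{2N}\le r_j^{N-\gamma}\nu_\gamma$ on pairs at distance $\sim r_j$.

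\emph{Step 3 (bookkeeping; the main obstacle).} Since $-\gamma>0$, the last sum is dominated by its top term, which is $\sim r_{J}^{-\gamma}\sim (s/\lambda)^{|\gamma|/(a-\varepsilon)}$. This is the wrong direction: it grows in $s$. So the naive counting must be reversed. What I expect to work is to count a bad pair at scale $r_j$ as forcing a \emph{large} $\nu_\gamma$-mass of $\Egpl$, namely $\gtrsim r_j^{\gamma}\times(\text{Lebesgue measure})$. Reading the inequality the other way then yields a factor $r_j^{\,|\gamma|}\cdot(\cdot)$ on the Lebesgue side, hence decay. Combined with the freedom in choosing $\varepsilon$ and which scales are used, this should produce the power $(\lambda/s)^{p+\delta}$. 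The key numerical fact to exploit is $|\gamma|>p$, i.e.\ $|\gamma|/(1+a)\cdot$ (appropriate factor) $>p$.

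Getting this exponent strictly above $p$ is the crux, and it is exactly where $\gamma<-p$ must enter. A uniform chain of equal steps gives only the exponent $p-p/|\gamma|$, which is not enough. If the dyadic weighting cannot be tuned to give exponent $>p$, the fallback is to first prove the estimate for truncations $T_Mu$ and then pass to the limit $M\to\infty$. This works because $\Egpl(T_Mu)\subseteq\Egpl(u)$, so truncation does not increase $\Fgpl$.
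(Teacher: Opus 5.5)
\textbf{Step 1 reduces to a false estimate.} The decay estimate you reduce to in Step~1 is false, so no tuning of $\varepsilon$ or of the scales in Step~3 can rescue it. Take $N=1$, $\Omega=(0,1)$ and $u(x)=Mx$ with $M>\lambda$. The condition $M|y-x|>\lambda|y-x|^{1+\gamma/p}$ reads $|y-x|^{|\gamma|/p}>\lambda/M$. Hence $\Egpl(u,\Omega)=\{|y-x|>\rho\}$ with $\rho=(\lambda/M)^{p/|\gamma|}$, and
\[
\nu_\gamma(\Egpl)\le 2\int_\rho^\infty t^{\gamma-1}\,dt=\frac{2}{|\gamma|}\Big(\frac{M}{\lambda}\Big)^p .
\]
Take the level $s=M/2$, which is admissible once $M\ge 2s_0\lambda$. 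The left-hand side is $\Leb^2(\{|y-x|>1/2\})=1/4$. Your right-hand side is at most $\frac{2^{1+p+\delta}C}{|\gamma|}(\lambda/M)^{\delta}$, with your $\delta>0$, and this tends to $0$ as $M/\lambda\to+\infty$.

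The same example shows more. Any bound of the form $\Leb^2(\{|u(y)-u(x)|>s\})\le g(\lambda/s)\,\nu_\gamma(\Egpl)$ forces $g(t)\gtrsim t^p$. But then $\int_{s_0\lambda}^\infty s^{p-1}g(\lambda/s)\,ds$ diverges. So charging every level set to the whole of $\Egpl$ cannot work.

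The truncation fallback does not help either, since this $u$ is bounded.

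\textbf{Step 2 has an unjustified tail step.} For $\gamma<-p$ the threshold $\lambda r^{1+\gamma/p}$ blows up as $r\to0$, so $\Fgpl$ gives no control on fine-scale increments. The only bounds available off $\Egpl$, namely $\lambda r_j^{1+\gamma/p}$, sum to a divergent series. A chain of balls shrinking to the Lebesgue points $x,y$ therefore cannot be closed by ``adjusting constants''.

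\textbf{The missing idea: a pointwise bound per pair.} The paper proves a \emph{pointwise} bound for each relevant pair and charges it to the visible intervals of a grid adapted to that pair, not to all of $\Egpl$.

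By Lemma~\ref{lemma:meas_theory} it suffices to integrate over pairs with $|y-x|>r_0$ and $|u(y)-u(x)|^p$ at least a quarter of the mean of $|u(y')-u(x')|^p$. If $\lambda$ exceeds $(r_0/2)^{-1-\gamma/p}$ times the $p$-th root of that quarter-mean, the inequality is trivial thanks to the $\lambda^p$ term.

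Otherwise, write $y-x=2^{-k}\delta$ with $\delta\in(1/2,1]$. Then the hypothesis of Proposition~\ref{prop:est_DF} holds on the interval $[0,2^{-k}]$ of the grid $x+\delta\D$, whose endpoints correspond to $x$ and $y$. This gives
\[
|u(y)-u(x)|^p\le 2^{-\gamma}\lambda^p\delta^{p+\gamma}2^{-k(p-1)}\DFgpl .
\]

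That proposition is the stopping-time argument you lack:
\begin{enumerate}
\item After truncating to $[A,B]$, all sufficiently small dyadic intervals are invisible, precisely because $1+\gamma/p<0$.
\item Hence the maximal non-visible dyadic subintervals form a \emph{finite}, essentially disjoint cover.
\item You telescope over their endpoints using grid values only, so there is no tail.
\item H\"older's inequality with $\sum_k|\Imp_k|2^{-k}=\beta-\alpha$ and the count $|\Vis_{k-1}|\ge\frac12|\Imp_k|$ then give exactly the power $p$.
\end{enumerate}

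To finish, integrate over pairs and use periodicity in $x$ together with Proposition~\ref{prop:dyadic_repr}; this yields the bound $2^{4-\gamma}\Fgpl(u,(0,1))$. Scaling gives a general interval. Slicing gives higher dimensions, since convexity makes the slices intervals of length between $r_0$ and $\diam\Omega$.
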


As for the case $\gamma>0$, we provide a counterexample showing that such inequality cannot hold without some correction or additional assumption (see Proposition~\ref{prop:counterex_>0}). However, we prove the following modified version, which includes a term depending on the oscillation of $u$, that is the difference between its (essential) supremum and infimum.

\begin{thm}[Poincaré inequality for $\gamma>0$] \label{thm:poincaré_>0}
Let $p\geq 1$ and $\gamma>0$ be real numbers, let $N$ be a positive integer, and let $\Omega\subseteq \R^N$ be a bounded and convex open set.

Then there exists a positive constant $K(\Omega,N,p,\gamma)>0$ such that
$$\iint_{\Omega \times \Omega} |u(y)-u(x)|^p\,dxdy \leq K(\Omega,N,\gamma,p)\cdot \left(\Fgpl(u,\Omega) + \lambda ^{-p} \osc(u,\Omega)^{2p} \right),$$
for every $u\in L^\infty(\Omega)$ and every $\lambda>0$.
\end{thm}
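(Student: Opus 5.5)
The plan is to combine a chaining argument along segments, which is available because $\Omega$ is convex, with a dichotomy for each pair of points. Either the pair lies in $\Egpl(u,\Omega)$, where we only use the trivial bound $|u(y)-u(x)|\leq M:=\osc(u,\Omega)$. Or it does not, and then $|u(y)-u(x)|\leq \lambda|y-x|^{1+\gamma/p}$. Since $\gamma>0$, the exponent $1+\gamma/p$ exceeds $1$, so short pairs outside $\Egpl$ have very small increments. We fix a scale $r\in(0,\diam\Omega]$ to be chosen later, and we may assume $r<\diam\Omega$, since otherwise the claimed inequality follows from a cruder estimate. For $x,y\in\Omega$ we connect $x$ to $y$ by a chain $x=z_0,z_1,\dots,z_n=y$ with $n\approx \diam\Omega/r$. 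Each $z_i$ is chosen randomly in a ball of radius comparable to $r$ around the corresponding point of the segment $[x,y]$, intersected with $\Omega$. Convexity, together with a uniform interior cone condition for bounded convex sets, ensures that these balls have measure $\gtrsim r^N$ with constants depending only on $\Omega$. This is where the dependence of $K$ on $\Omega$ enters.

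The first key estimate is a pointwise bound. If no step $(z_{i-1},z_i)$ lies in $\Egpl$, then $|u(y)-u(x)|\leq n\lambda (Cr)^{1+\gamma/p}\lesssim \lambda r^{\gamma/p}$. In every case $|u(y)-u(x)|\leq M$. Hence
\[
|u(y)-u(x)|^p\lesssim \lambda^p r^{\gamma}+M^p\,\mathbbm{1}\{\text{some step lies in }\Egpl\}.
\]
We then average over the random intermediate points and integrate over $(x,y)$. The probability that a given step is bad is bounded by the normalized Lebesgue measure of $\Egpl\cap\{|y-x|\leq Cr\}$. On this set the density $|y-x|^{\gamma-N}$ of $\nu_\gamma$ is at least $(Cr)^{\gamma-N}$ when $\gamma\leq N$. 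When $\gamma>N$ we argue with dyadic annuli $\{|y-x|\sim 2^{-k}r\}$ and pick the chain steps in a fixed annulus. Either way, this measure is at most $C r^{N-\gamma}\nu_\gamma(\Egpl)=Cr^{N-\gamma}\lambda^{-p}\Fgpl(u,\Omega)$. A union bound over the $n$ steps then yields
\[
\iint_{\Omega\times\Omega}|u(y)-u(x)|^p\,dx\,dy\lesssim \lambda^p r^\gamma+M^p\,r^{-1-\gamma}\lambda^{-p}\Fgpl(u,\Omega),
\]
with constants depending on $\Omega,N,\gamma,p$.

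The last step, and the one I expect to be the main obstacle, is choosing $r$ so that this reduces to the form $\Fgpl+\lambda^{-p}M^{2p}$. The first term is harmless once $\lambda^p r^\gamma\leq \lambda^{-p}M^{2p}$. The second term, however, carries the factor $M^p r^{-1-\gamma}\lambda^{-p}$, and this factor must be made $\lesssim 1$ at the same time. With the naive union bound these two requirements pull in opposite directions. My first fix is to improve the bookkeeping so that the bad-step factor loses no power of $n$. Concretely, I would count only the first bad step, and on the event that a bad step occurs I would bound $|u(y)-u(x)|^p\leq M^p$ directly. I would also use the fact that $\Egpl$ contains \emph{every} pair with $|u(y)-u(x)|>\lambda (Cr)^{1+\gamma/p}$ at distance $\le Cr$. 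This gives a lower bound on $\nu_\gamma(\Egpl)$ in terms of the mass of large increments. Choosing $r$ so that $\lambda r^{1+\gamma/p}\approx M^2/(\lambda\,\cdot)$, that is, making the threshold increment comparable to $\lambda^{-1}M^2$ divided by a power of $\diam\Omega$, should exactly produce the correction term $\lambda^{-p}M^{2p}$. If the exponents still fail to match, I would replace the single scale $r$ by a dyadic sum over scales $2^{-k}r$, applying the pointwise bound scale by scale and summing the resulting geometric series, which converges because $\gamma>0$.
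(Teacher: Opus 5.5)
There is a genuine gap. It sits where you suspect, and neither tuning $r$ nor finer bookkeeping closes it. Normalize $\diam\Omega\sim1$ and set $M=\osc(u,\Omega)$, $t=M/\lambda$. If $t\gtrsim1$ the theorem is trivial, since $\iint_{\Omega\times\Omega}|u(y)-u(x)|^p\,dxdy\le\Leb^N(\Omega)^2M^p\lesssim\lambda^{-p}M^{2p}$. That is the only role of the correction term; it is not meant to emerge from a balancing of scales. If $t\ll1$, then $\lambda^{-p}M^{2p}=t^pM^p\ll M^p$, so you must prove essentially $\iint|u(y)-u(x)|^p\lesssim\Fgpl(u,\Omega)$. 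Your estimate
\[
\iint_{\Omega\times\Omega}|u(y)-u(x)|^p\,dxdy\lesssim\lambda^pr^\gamma+t^p\,r^{-1-\gamma}\,\Fgpl(u,\Omega)
\]
cannot give this. Even with $r$ optimized depending on $u$, for a function with $\iint|u(y)-u(x)|^p\sim M^p$ it only yields $\Fgpl\gtrsim M^p\,t^{p/\gamma}$, whereas the theorem asserts $\Fgpl\gtrsim M^p$. Removing the union-bound factor $n\approx r^{-1}$ would not help either: the optimized estimate becomes $\iint|u(y)-u(x)|^p\lesssim(M^p\,\Fgpl)^{1/2}$, which does not imply the theorem when $\Fgpl\sim t^pM^p$. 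Your first fix does not remove that factor anyway, since counting only the first bad step leaves the probability that some step is bad unchanged. Your second fix adds nothing, because short pairs with large increments lie in $\Egpl$ by definition. The loss is structural: at a single scale every bad step is charged the full oscillation $M$. Summing single-scale bounds over dyadic scales, as in your last suggestion, inherits the same defect.

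The missing ingredient is a multiscale stopping-time estimate like Proposition~\ref{prop:est_DF}; its $\gamma>0$ counterpart, \cite[Proposition~2.2]{GP-gammaliminf}, is what the paper uses. In Proposition~\ref{prop:est_DF}, the increment between the endpoints of a dyadic interval is split along a disjoint family of non-visible dyadic pieces. A piece at scale $2^{-k}$ therefore carries at most the threshold $\lambda(\delta2^{-k})^{1+\gamma/p}$ of its own scale. Each piece is adjacent in the dyadic tree to a visible interval, so the number of pieces is controlled by the number of visible intervals, which $\DFgpl$ counts with weight $2^{-k(1+\gamma)}$. H\"older's inequality then gives $|v(\beta)-v(\alpha)|^p\lesssim\lambda^p(\beta-\alpha)^{p-1}\DFgpl$, and no piece is ever charged $M$. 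With this in hand the paper's argument is short. On $(0,1)$, in the regime $\lambda>(2/r_0)^{1+\gamma/p}\osc(u,(0,1))$, every pair with $y-x>r_0$ satisfies the hypothesis of \cite[Proposition~2.2]{GP-gammaliminf} on the rescaled interval $[0,2^{-k(y-x)}]$. Integrating the resulting pointwise bound over $y-x>r_0$ (Lemma~\ref{lemma:meas_theory}) and applying Proposition~\ref{prop:dyadic_repr} gives $\iint|u(y)-u(x)|^p\lesssim\Fgpl(u,(0,1))$ with no extra term. The case $N\ge2$ follows by slicing, with convexity ensuring that the slices are intervals.
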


We remark that, even though estimating the $L^p$ oscillation with an $L^\infty$ oscillation term might seem trivial, this inequality is still meaningful when $\lambda$ is sufficiently large, and it recovers the local Poincaré inequality in the limit as $\lambda \to +\infty$.

\paragraph{Overview of the technique}

The proof of Theorem~\ref{thm:gamma-liminf} relies on the strategy developed in \cite{GP-gammaliminf} (which in turn was inspired by \cite{1998-CPAM-Gobbino}). We begin by considering a sort of cell problem in the one-dimensional case, namely we provide a quantitative estimate of the minimal energy of a function $u:(a,b)\to [A,B]$ such that $u(x)=A$ for $x$ close to $a$, and $u(x)=B$ for $x$ close to $b$ (see Proposition~\ref{prop:gatto}).

As in \cite{GP-gammaliminf}, this estimate is achieved by first representing the functional (\ref{defn:Fgpl}) as an average, over $\delta\in (1/2,1)$ and $x\in (0,\delta)$, of a discretized version of the functional which takes into account only the values of $u$ on points of the form $x+\delta i 2^{-k}$, with $i\in \Z$ and $k\in\N$. In this way we reduce the cell problem to a combinatorial estimate in the dyadic setting, that we prove in Proposition~\ref{prop:est_DF}.

After this local estimate is established, we prove the one-dimensional version of the liminf inequality by standard localization arguments, and we extend it to higher dimensions with the usual slicing method.

As for the Poincaré inequality, we still start with the one-dimensional case $\Omega=(a,b)$, and we rely again on the combination of the dyadic discretization and the estimate in the discrete setting, in order to bound the $L^p$ oscillation with the functional. Here the main difficulty is due to the fact that, when $\gamma<-p$, the discrete estimate in an interval $[x,y]\subset (a,b)$ holds only if the couple $(x,y)$ made by the endpoints belongs to the set $\Egpl(u,(a,b))$, namely if $|u(y)-u(x)|>\lambda |y-x|^{1+\gamma/p}$.

Roughly speaking, we solve this problem by showing that we can compute the double integral
$$\iint_{\Omega\times\Omega}|u(y)-u(x)|^p\,dxdy$$
just by integrating over couples $(x,y)$ for which both $|u(y)-u(x)|$ and $|y-x|$ are not too small (see Lemma~\ref{lemma:meas_theory}). Then, if $\lambda$ is small enough, all such couples $(x,y)$ belong to $\Egpl(u,(a,b))$, so we can apply the discrete estimate. On the other hand, if $\lambda$ is not small, then we can exploit term $K\cdot\lambda^p$ in the right-hand side. We remark that the inequality cannot be true without such addendum, since it holds that
$$\Fgpl(u,(a,b))=0 \qquad \forall\lambda>\frac{\osc(u,(a,b))}{(b-a)^{1+\gamma/p}}.$$

In the case $\gamma>0$ this approach does not work. In fact, in this case the condition to apply the discrete estimate becomes that $(x,y)$ does \emph{not} belong to $\Egpl(u,(a,b))$. However, when $\gamma>0$ we cannot deduce that $(x,y)\notin \Egpl(u,(a,b))$ if we just know that $|y-x|$ is not close to zero, unless we know in advance that the oscillation of $u$ is small with respect to $\lambda$. This is the reason for which we need to add the oscillation in the right-hand side.

\paragraph{Structure of the paper}

The paper is organized as follows. In section~\ref{sec:gatto} we prove representation formula and the lower bound for the dyadic functional in the case $\gamma<-p$. In section~\ref{sec:gamma-liminf} we prove Theorem~\ref{thm:gamma-liminf}. In section~\ref{sec:Poincaré} we prove Theorem~\ref{thm:poincaré_<-p} and Theorem~\ref{thm:poincaré_>0}, and we show that in the case $\gamma>0$ the Poincaré inequality cannot hold without some correction. Finally, in section~\ref{sec:op} we discuss some open problems.

We conclude the paper with a short appendix including a result that we do not use in this paper, but that we hope might be useful for future research.

\section{The dyadic functional}\label{sec:gatto}

In this section we extend the dyadic discretization introduced in \cite{GP-gammaliminf} to every $\gamma\in\R$ and we prove a lower bound for the discretized version of the functional. To this end, let us recall the notation used in \cite{GP-gammaliminf}, with some small modifications.

Let $\gamma \in \R$, $p\geq 1$, and $\lambda>0$ be real numbers. Let us consider the function
\begin{equation}
\psigpl(\delta,\Delta):=
\begin{cases}
1 &\mbox{if }\Delta > \lambda \delta^{1+\gamma/p}\\
0 &\mbox{if }\Delta \leq \lambda \delta^{1+\gamma/p}
\end{cases}
\qquad\quad
\forall(\delta, \Delta)\in [0,+\infty)^2.
\label{defn:psi-gpl}
\end{equation}

Let $\D:=\{i/2^{k}:i\in \Z, \ k\in\N \}$ denote the set of \emph{dyadic numbers}. For every nonnegative integer $k\in\N$, a \emph{$k$-dyadic interval} is any interval of the form
\begin{equation}\label{defn:D-int}
\left[\frac{i}{2^k},\frac{i+1}{2^k}\right]
\end{equation}
for some $i\in \Z$. More generally, a \emph{dyadic interval} is a closed interval that is a $k$-dyadic interval for some $k\in\N$.

For every interval $I\subseteq\R$ and every integer $k\geq 0$, we consider the $k$-dyadic intervals contained in $I$, which are indexed by the set
\begin{equation}\label{defn:Dk}
\D_k(I):=\left\{i\in \Z: \left[\frac{i}{2^k},\frac{i+1}{2^k}\right]\subseteq I \right\}.
\end{equation}

For every $\delta>0$ and every function $v:\D\cap I \to \R$, the \emph{dyadic functional} is defined as
\begin{equation}\label{defn:DF}
\DFgpl(\delta,v,I):=\sum_{k=0} ^{\infty} \frac{1}{2^{k(1+\gamma)}} \sum_{i\in \D_k(I)} \psigpl\left(\frac{\delta}{2^k}, \left|v\left(\frac{i+1}{2^k}\right)-v\left(\frac{i}{2^k}\right)\right| \right).
\end{equation}

We are now ready to state the two crucial properties of the dyadic functional. The first one is the following representation formula, which extends \cite[Proposition~2.1]{GP-gammaliminf} to every $\gamma\in\R$, at the cost of replacing an equality with an inequality.

\begin{prop}[Dyadic representation]\label{prop:dyadic_repr}

Let $\gamma\in\R$ and $p\geq 1$ be real numbers. Let $(a,b)\subseteq\R$ be an interval, and let $u:(a,b)\to \R$ be a measurable function. 

For every $x\in \R$, and every real number $\delta>0$, let us consider the numbers
\begin{equation}
a_{x,\delta}:=\frac{a-x}{\delta}
\qquad\text{and}\qquad
b_{x,\delta}:=\frac{b-x}{\delta},
\label{defn:abxd}
\end{equation}
and the function $v_{x,\delta}:\D\cap (a_{x,\delta},b_{x,\delta})\to \R$ defined by
\begin{equation}\label{defn:v_x,delta}
v_{x,\delta}(q) :=u(x+q\delta) \qquad \forall q\in \D\cap (a_{x,\delta}, b_{x,\delta}).
\end{equation}

Then for every positive real number $\lambda>0$ it turns out that
\begin{equation}\label{th:dyadic_repr}
\Fgpl(u,(a,b))\geq 2\lambda^p \int_{1/2} ^1  d\delta \, \delta^{\gamma-1} \int_{0} ^{\delta} \DFgpl\left(\delta,v_{x,\delta},(a_{x,\delta}, b_{x,\delta}) \right)\,dx.
\end{equation}
\end{prop}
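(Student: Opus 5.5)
Write $F:=\Fgpl(u,(a,b))$ and expand the right-hand side of (\ref{th:dyadic_repr}) by unfolding the definition (\ref{defn:DF}). For fixed $\delta$, $k$, $i$, the term with index $i\in\D_k((a_{x,\delta},b_{x,\delta}))$ concerns the pair of points
\[
X=x+\frac{i\delta}{2^k},\qquad Y=X+\frac{\delta}{2^k},
\]
both lying in $(a,b)$. By the definitions (\ref{defn:psi-gpl}) and (\ref{defn:v_x,delta}), the summand equals $\mathbbm{1}_{\Egpl(u,(a,b))}(X,Y)$, because $|Y-X|=\delta 2^{-k}$. Everything is nonnegative, so Tonelli lets me exchange the sums and integrals freely.

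\textbf{Step 1 (fixed $k$, sum over $i$ and integral in $x$).} As $x$ ranges over $(0,\delta)$ and $i$ over $\Z$, the point $X=x+i\delta 2^{-k}$ covers each real number exactly $2^k$ times: for each residue class $i \bmod 2^k$, the map $x\mapsto X$ sweeps a translate of $(0,\delta)$, and these translates tile $\R$. The constraint $i\in\D_k$ means $[X,X+\delta 2^{-k}]\subseteq(a,b)$ up to endpoints, which holds exactly when $X\in(a,b-h)$ with $h=\delta 2^{-k}$ (endpoint issues are null sets). Hence
\[
\int_0^\delta\sum_{i\in\D_k}\psigpl(\cdot)\,dx = 2^k\int_a^{b-h}\mathbbm{1}_{E}(X,X+h)\,dX=:2^k g(h),
\]
where $E=\Egpl(u,(a,b))$.

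\textbf{Step 2 (integral in $\delta$).} The right-hand side becomes
\[
2\lambda^p\sum_{k\ge0}2^{-k\gamma}\int_{1/2}^1\delta^{\gamma-1}g(\delta 2^{-k})\,d\delta .
\]
Substituting $h=\delta 2^{-k}$ gives $\delta^{\gamma-1}\,d\delta=2^{k\gamma}h^{\gamma-1}\,dh$, and $h$ ranges over $(2^{-k-1},2^{-k})$. The factor $2^{k\gamma}$ cancels $2^{-k\gamma}$, and since these intervals are disjoint with union $(0,1)$, the sum collapses to
\[
2\lambda^p\int_0^1 h^{\gamma-1}g(h)\,dh .
\]

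\textbf{Step 3 (comparison with $F$).} By symmetry of $E$ and the change of variables $y=x+h$,
\[
F=2\lambda^p\int_0^{\infty}h^{\gamma-1}g(h)\,dh ,
\]
with $g(h)=0$ for $h\ge b-a$. Since the integrand is nonnegative, restricting to $h\in(0,1)$ can only decrease the integral, which gives (\ref{th:dyadic_repr}). This truncation is exactly why the identity of \cite[Proposition~2.1]{GP-gammaliminf} degrades to an inequality. No sign condition on $\gamma$ enters, since only nonnegativity is used.

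\textbf{Main obstacle.} The delicate point is the covering count in Step 1: each $X$ should be counted $2^k$ times, and the constraint on $i$ should match $X\in(a,b-h)$ exactly. The scaling $\delta\in(1/2,1)$ versus $2^{-k}$ then has to produce the factor $2^{-k(1+\gamma)}$ precisely. I would check this by first doing $k=0$ explicitly and then verifying the Jacobians.
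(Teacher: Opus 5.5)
Your proposal is correct and is essentially the computation behind the paper's proof, which simply defers to the identity of \cite[Proposition~2.1]{GP-gammaliminf} and notes that discarding interactions can only lower the energy. Your Step~3 makes this explicit: the only pairs of $\Egpl(u,(a,b))$ lost in the discretization are those with $|y-x|\geq 1$, so the inequality is in fact an equality whenever $b-a\leq 1$.
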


\begin{proof}
The proof is identical to the proof of \cite[Proposition~2.1]{GP-gammaliminf}. The additional assumptions in \cite{GP-gammaliminf}, including the positivity of $\gamma$, were only used to ensure that no interactions, namely points in $\Egpl(u,(a,b))$, were neglected in the discretization procedure, so that (\ref{th:dyadic_repr}) was an equality in that case.

However, even if we neglected some interactions, that would only mean that the discretization might reduce the energy, so we still deduce the correct inequality.
\end{proof}

The next result is an estimate from below for the dyadic functional in terms of the oscillation of $v$ in $(a,b)$, which extends \cite[Proposition~2.2]{GP-gammaliminf} to the case $\gamma<-p$. The strategy of the proof is similar to the one in \cite{GP-gammaliminf} for the case $\gamma>0$, with suitable adaptations.

The general idea is that here the dyadic intervals contributing to the sum in (\ref{defn:DF}), which we call \emph{visible intervals}, are the larger ones, as opposite to the case $\gamma>0$, in which small intervals were relevant (this is due to the sign of the exponent in (\ref{defn:Egpl})). However, in both cases the crucial point is to identify those \emph{important intervals} that separate the visible intervals form the rest.

\begin{prop}[Asymptotic cost of oscillations for the dyadic functional]\label{prop:est_DF}
Let $p\geq 1$, $\gamma<-p$, and $B\geq A$ be real numbers. For a natural number $k_0\in\N$, let $[\alpha,\beta]\subseteq\R$ be a $k_0$-dyadic interval, and let $v:\D\cap [\alpha,\beta] \to \R$ be a function such that $v(\alpha)=A$ and $v(\beta)=B$.

Then for every $\delta\in[1/2,1]$, and every positive real number $\lambda< 2^{(k_0+1)(1+\gamma/p)}(B-A)$, it holds that
\begin{equation}\label{th:est_DF}
\DFgpl(\delta,v,[\alpha,\beta])\geq 
2^\gamma \cdot\frac{1}{\lambda^p \delta^{p+\gamma}}
\cdot \frac{(B-A)^p}{(\beta-\alpha)^{p-1}}.
\end{equation}
\end{prop}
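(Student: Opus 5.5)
The plan is to organize the dyadic subintervals of $[\alpha,\beta]$ as a binary tree and to isolate the \emph{important intervals}: the dyadic intervals that are not visible but whose parent is visible. Their increments will control $B-A$, and their parents will control the cost. Throughout I use the notation $\sigma:=-(1+\gamma/p)>0$. A $k$-dyadic interval $[i2^{-k},(i+1)2^{-k}]$ with increment $\Delta$ is \emph{visible} (i.e.\ $\psigpl=1$) iff $\Delta>\lambda\delta^{1+\gamma/p}2^{k\sigma}$. This threshold increases with $k$, while the weight $2^{-k(1+\gamma)}$ also increases since $1+\gamma<0$.

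\emph{Step 1: the root is visible, and the tree is finite.} Since $\delta\ge 1/2$ and $1+\gamma/p<0$, we have $\delta^{1+\gamma/p}\le 2^{-(1+\gamma/p)}$. Hence the threshold for $[\alpha,\beta]$ is at most $\lambda 2^{-(k_0+1)(1+\gamma/p)}$, which is $<B-A$ by the hypothesis on $\lambda$. So $[\alpha,\beta]$ is visible. We may assume $\DFgpl(\delta,v,[\alpha,\beta])<\infty$, otherwise there is nothing to prove. Every visible interval contributes at least $2^{-k_0(1+\gamma)}>0$, so there are only finitely many visible intervals. Visibility is not hereditary, so the visible intervals do not form a subtree. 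Consider therefore the subtree $\mathcal T$ of intervals all of whose ancestors, and they themselves, are visible. This is finite and contains the root. Its ``leaves' children'', namely the intervals not in $\mathcal T$ whose parent lies in $\mathcal T$, are exactly the important intervals $J_1,\dots,J_n$. They form a finite partition of $[\alpha,\beta]$ into dyadic intervals of lengths $\ell_j=2^{-k_j}$ with $\sum_j\ell_j=\beta-\alpha$.

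\emph{Step 2: two inequalities.} By telescoping along the ordered endpoints of the partition, $B-A\le\sum_j|\text{increment of }J_j|$. Since each $J_j$ is invisible, this gives
\[
B-A\le \lambda\delta^{1+\gamma/p}\sum_j \ell_j^{1+\gamma/p}.
\]
On the other hand, each $J_j$ has a visible parent of level $k_j-1$, and each parent has at most two important children. Counting only these parents in (\ref{defn:DF}) therefore gives
\[
\DFgpl(\delta,v,[\alpha,\beta])\ge \tfrac12\sum_j 2^{-(k_j-1)(1+\gamma)}=2^{\gamma}\sum_j\ell_j^{1+\gamma}.
\]

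\emph{Step 3: Hölder.} Write $\ell^{1+\gamma/p}=\ell^{(1+\gamma)/p}\,\ell^{1-1/p}$ and apply Hölder with exponents $p$ and $p/(p-1)$. This gives $\sum\ell_j^{1+\gamma/p}\le(\sum\ell_j^{1+\gamma})^{1/p}(\beta-\alpha)^{1-1/p}$; for $p=1$ the bound is trivial. Combining with Step 2 yields
\[
\sum_j\ell_j^{1+\gamma}\ge \frac{(B-A)^p}{\lambda^p\delta^{p+\gamma}(\beta-\alpha)^{p-1}},
\]
which is exactly (\ref{th:est_DF}) with the constant $2^\gamma$.

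The main obstacle is Step 1, namely making sure the important intervals genuinely form a \emph{finite} partition, so that the telescoping in Step 2 is legitimate and no mass escapes to infinitely deep chains. This is handled by the finiteness reduction: finite energy forces only finitely many visible intervals, since each costs a uniformly positive amount when $1+\gamma<0$. One must also check that restricting to the hereditary subtree $\mathcal T$ is harmless; it is, because only a lower bound on the cost is needed.
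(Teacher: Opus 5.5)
Your proof is correct and follows the paper's argument. Your important intervals (the children of the hereditary visible subtree $\mathcal T$ that lie outside it) coincide with the paper's recursively defined ones, and the telescoping over these invisible intervals, the Hölder step and the count $|\Vis_{k-1}|\ge\frac12|\Imp_k|$ are as in the paper. The only deviation is how you make the partition finite: you reduce to finite energy and use that each visible interval carries weight $2^{-k(1+\gamma)}\ge 1$, whereas the paper truncates $v$ to $[A,B]$ so that all $k$-dyadic intervals with large $k$ are automatically invisible (a uniform bound it reuses in the appendix); both routes are valid.
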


\begin{proof}[Proof of Proposition~\ref{prop:est_DF}]
We observe that the functional (\ref{defn:DF}) does not increase under truncation, namely if we replace $v(x)$ with $\max\{\min\{v(x),B\},A\}$, so we can assume without loss of generality that
\begin{equation}\label{hp:v_bounded}
A\leq v(x) \leq B \qquad \forall x\in \D\cap [\alpha,\beta].
\end{equation}

Let us recall some notation and terminology from \cite{GP-gammaliminf}. We say that two intervals are \emph{essentially disjoint} if they do not share internal points, namely if they intersect at most at the boundary. We point out that any two dyadic intervals are either essentially disjoint or they are one contained into the other.

We say that a $k$-dyadic interval of the form (\ref{defn:D-int}) is \emph{visible} if
\begin{equation*}
\left|v\left(\frac{i+1}{2^k}\right) - v\left(\frac{i}{2^k}\right) \right| >\lambda \left(\frac{\delta}{2^{k}}\right)^{1+\gamma/p},
\end{equation*}
and we consider the set
\begin{equation*}
    \Vis_k:=
    \left\{
    i\in\Z:
    \left[\frac{i}{2^k},\frac{i+1}{2^k}\right]\subseteq[\alpha,\beta] 
    \mbox{ is visible} 
    \right\},
\end{equation*}
that indexes all $k$-dyadic intervals that are visible and contained in $[\alpha,\beta]$.

Visible dyadic intervals are exactly those whose endpoints give a positive contribution in the right-hand side of (\ref{defn:DF}), so that
\begin{equation}
\DFgpl(\delta,v,[\alpha,\beta])=
\sum_{k=k_0}^\infty \frac{|\Vis_k|}{2^{k(1+\gamma)}},
\label{est:DF-Vis}
\end{equation}
where the bars denote the number of elements of a set.

Now, for every $k\geq k_0$, we introduce the set of $k$-dyadic intervals that are \emph{important}, but we point out that this definition is different from the one used in \cite{GP-gammaliminf}.

The definition is recursive in $k$: we say that a $k$-dyadic interval $I$ is important if it verifies the following two conditions.
\begin{enumerate}
\renewcommand{\labelenumi}{(\roman{enumi})}
\item It is not visible.

\item The unique $(k-1)$-dyadic interval that contains $I$ is visible.

\item It is not contained in any important $h$-dyadic interval with $h\leq k-1$.
\end{enumerate}

We remark that the assumptions imply that
\begin{equation}\label{[alpha,beta]-visible}
|v(\beta)-v(\alpha)|=B-A>\lambda \left(\frac{1}{2^{k_0+1}}\right)^{1+\gamma/p}\geq \lambda \left(\frac{\delta}{2^{k_0}}\right)^{1+\gamma/p},
\end{equation}
and this means that $[\alpha,\beta]$, which is the only $k_0$-dyadic interval contained in $[\alpha,\beta]$, is visible, hence it is not important, and in particular there are no important $k_0$-dyadic intervals.

We also introduce the set
\begin{equation*}
    \Imp_k:=
    \left\{
    i\in\Z:
    \left[\frac{i}{2^k},\frac{i+1}{2^k}\right]\subseteq[\alpha,\beta] 
    \mbox{ is important} 
    \right\},
\end{equation*}
that indexes all $k$-dyadic intervals that are important and contained in $[\alpha,\beta]$.

In the sequel we need the following properties.
\begin{enumerate}[label={(\arabic*)}]

\item Important dyadic intervals (even with different $k$'s) are pairwise essentially disjoint.

\item There are finitely many important dyadic intervals, and they cover $[\alpha,\beta]$.
\end{enumerate}

Let us prove these two properties.

The first property is immediate: since property (iii) prevents important dyadic intervals to be contained one into another, then they must be pairwise essentially disjoint.

As for the second one, we observe that, since $\delta\leq 1$ and $1+\gamma/p<0$, then from (\ref{hp:v_bounded}) we deduce that that
\begin{equation}\label{small-interval-invisible}
\frac{1}{2^k}<\left(\frac{B-A}{\lambda}\right)^{\frac{1}{1+\gamma/p}} \Rightarrow \left|v\left(\frac{i+1}{2^k}\right)-v\left(\frac{i}{2^k}\right)\right|\leq B-A < \frac{\lambda}{2^{k(1+\gamma/p)}}\leq \lambda \left(\frac{\delta}{2^{k}}\right)^{1+\gamma/p},
\end{equation}
and this means that there exists some integer $K>k_0$ such that $k$-dyadic intervals cannot be visible if $k\geq K$. As a consequence, by property (ii), $k$-dyadic intervals cannot be important if $k\geq K+1$, and hence there cannot be infinitely many important intervals.

Moreover, we recall that $[\alpha,\beta]$ is visible because of (\ref{[alpha,beta]-visible}). Therefore, if we fix any $x\in [\alpha,\beta]$, then we can consider the largest dyadic interval $I$ contained in $[\alpha,\beta]$ that contains $x$ and is not visible. This might be non unique if $x$ is a dyadic number, but in this case we just consider any of the two such intervals. Then $I$ is a $k$-dyadic interval for some $k>k_0$, it is not visible, and all $h$-dyadic intervals with $h<k$ that are contained in $[\alpha,\beta]$ and contain $I$ are visible, and hence not important. This is enough to deduce that $I$ is important, and hence we deduce that important dyadic intervals cover $[\alpha,\beta]$.

\smallskip

Now we claim that
\begin{equation}\label{est_J}
(B-A)^p\leq \lambda^p\delta^{p+\gamma}\cdot (\beta-\alpha)^{p-1}\cdot \sum_{k=k_0+1}^{K}\frac{|\Imp_k|}{2^{k(1+\gamma)}}.
\end{equation}

In order to prove this, we observe that, thanks to property (2) of important dyadic interval, we can write
\begin{equation}\label{union-imp}
[\alpha,\beta]= 
\bigcup_{k=k_0+1} ^{K} \bigcup_{i\in \Imp_k} \left[\frac{i}{2^k},\frac{i+1}{2^k}\right],
\end{equation}
and the intervals appearing in the right-hand side are pairwise essentially disjoint.

Since all these intervals are not visible by definition, we can estimate the difference between the values on $v$ at the endpoints of each interval in the following way
\begin{multline*}
B-A = |v(\beta)-v(\alpha)|
\leq  \sum_{k=k_0+1} ^{K}\sum_{i\in \Imp_k} \left|v\left(\frac{i+1}{2^k}\right) - v\left(\frac{i}{2^k}\right)\right|\\
\leq  \sum_{k=k_0+1} ^{K}\sum_{i\in \Imp_k} \lambda\left(\frac{\delta}{2^k}\right)^{1+\gamma/p}
=  \sum_{k=k_0+1} ^{K} \lambda |\Imp_k| \left(\frac{\delta}{2^k}\right)^{1+\gamma/p}.
\end{multline*}

At this point by Hölder inequality and (\ref{union-imp}) we conclude that
\begin{align*}
B-A &\leq  \lambda \delta^{1+\gamma/p} \left(\sum_{k=k_0+1}^{K} \frac{|\Imp_k|}{2^{k(1+\gamma)}}\right)^{1/p} \left(\sum_{k=k_0+1}^{K} \frac{|\Imp_k|}{2^k}\right)^{(p-1)/p}\\
&= \lambda \delta^{1+\gamma/p} \left(\sum_{k=k_0+1}^{K} \frac{|\Imp_k|}{2^{k(1+\gamma)}}\right)^{1/p} (\beta-\alpha)^{(p-1)/p},
\end{align*}
which is equivalent to (\ref{est_J}).

\smallskip

Now we observe that
\begin{equation}\label{est:vis_imp}
|\Vis_{k-1}| \geq \frac{1}{2} |\Imp_{k}|,
\end{equation}
because every important $k$-dyadic interval is contained in a visible $(k-1)$-dyadic interval, and any $(k-1)$-dyadic interval can contain at most two (important) $k$-dyadic intervals.

Thus, from (\ref{est:DF-Vis}) we deduce that
\begin{equation*}
\DFgpl(\delta,v,[\alpha,\beta]])\geq\sum_{k=k_0}^{K-1} \frac{|\Vis_k|}{2^{k(1+\gamma)}} =2^{1+\gamma}\sum_{k=k_0+1}^{K} \frac{|\Vis_{k-1}|}{2^{k(1+\gamma)}} \geq 2^\gamma\sum_{k=k_0+1}^{K} \frac{|\Imp_k|}{2^{k(1+\gamma)}} .
\end{equation*}

Recalling (\ref{est_J}), this inequality implies (\ref{th:est_DF}).
\end{proof}

We may now remove the assumption that $[\alpha,\beta]$ is precisely a dyadic interval, at the cost of introducing an error term.

\begin{cor}\label{cor:est_DF}
Let $\alpha<\beta$ be dyadic numbers such that $\alpha=i_1/2^{k_0}$ and $\beta=i_2/2^{k_0}$ for some integers $i_1<i_2$ and for some number $k_0\in\N$. Let also $p\geq 1$, $\gamma<-p$, and $B\geq A$ be real numbers, and let $v:\D\cap [\alpha,\beta] \to [A,B]$ be a function such that $v(\alpha)=A$ and $v(\beta)=B$.

Then for every $\delta\in[1/2,1]$, and every real number $\lambda>0$ such that
$$(i_2-i_1)\lambda 2^{-(k_0+1)(1+\gamma/p)}< B-A,$$
it holds that
\begin{equation}\label{th:est_DF_error}
\DFgpl(\delta,v,[\alpha,\beta])\geq 
2^\gamma \cdot\frac{1}{\lambda^p \delta^{p+\gamma}}
\cdot \frac{(B-A-(i_2-i_1)\lambda 2^{-(k_0+1)(1+\gamma/p)} )^p}{(\beta-\alpha)^{p-1}}.
\end{equation}
\end{cor}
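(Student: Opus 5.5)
The idea is to split $[\alpha,\beta]$ into the $n:=i_2-i_1$ consecutive $k_0$-dyadic intervals $I_j:=[\alpha_j,\alpha_{j+1}]$, where $\alpha_j:=(i_1+j)/2^{k_0}$ for $j=0,\dots,n$. We then apply Proposition~\ref{prop:est_DF} to those pieces on which $v$ increases by enough, and combine the resulting bounds with Hölder's inequality. The dyadic functional is additive over essentially disjoint dyadic pieces: every $k$-dyadic interval with $k\geq k_0$ lies in exactly one $I_j$, and intervals with $k<k_0$ only add nonnegative terms. Hence
$$\DFgpl(\delta,v,[\alpha,\beta])\geq\sum_{j=0}^{n-1}\DFgpl(\delta,v,I_j).$$

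Set $\eta:=\lambda 2^{-(k_0+1)(1+\gamma/p)}$ and $d_j:=v(\alpha_{j+1})-v(\alpha_j)$. Call $j$ \emph{good} if $d_j>\eta$. For good $j$, Proposition~\ref{prop:est_DF} applies on $I_j$ with $A=v(\alpha_j)$ and $B=v(\alpha_{j+1})$. Its hypothesis $\lambda<2^{(k_0+1)(1+\gamma/p)}(B-A)$ is exactly $d_j>\eta$, and the requirement $B\geq A$ holds. The proposition gives
$$\DFgpl(\delta,v,I_j)\geq \frac{2^\gamma}{\lambda^p\delta^{p+\gamma}}\cdot\frac{d_j^p}{2^{-k_0(p-1)}}.$$
The proposition does not need $v$ to take values in $[A,B]$ on $I_j$, since it truncates internally. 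The telescoping sum is $B-A=\sum_j d_j$. Every bad $j$ has $d_j\leq\eta$, so
$$\sum_{j\text{ good}} d_j\geq B-A-\sum_{j\text{ bad}}d_j\geq B-A-n\eta>0.$$
Negative $d_j$ only help here, because a negative $d_j$ makes the sum over bad $j$ smaller.

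It remains to combine the pieces. Let $m$ be the number of good $j$, so $m\leq n$. By Hölder's inequality,
$$\sum_{j\text{ good}}d_j^p\geq m^{1-p}\Big(\sum_{j\text{ good}}d_j\Big)^p\geq n^{1-p}(B-A-n\eta)^p.$$
Multiplying by $2^{k_0(p-1)}$ gives
$$2^{k_0(p-1)}n^{1-p}=(n2^{-k_0})^{1-p}=(\beta-\alpha)^{1-p},$$
which yields (\ref{th:est_DF_error}).

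The only delicate points are these. First, the hypothesis of Proposition~\ref{prop:est_DF} has to be checked piece by piece, which is why we pass to the good indices and absorb the bad ones into the error $n\eta$. Second, the good set must be nonempty, and this is guaranteed by the assumption $n\eta<B-A$. Third, the superadditivity of $\DFgpl$ over $k_0$-dyadic subintervals should be stated explicitly; it is immediate from the definition (\ref{defn:DF}) since all terms are nonnegative.
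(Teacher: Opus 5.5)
Your proof is correct and follows essentially the same route as the paper. Like the paper, you split $[\alpha,\beta]$ into the $i_2-i_1$ consecutive $k_0$-dyadic intervals, apply Proposition~\ref{prop:est_DF} on the pieces whose increment exceeds $\lambda 2^{-(k_0+1)(1+\gamma/p)}$, charge at most that amount to each remaining piece, and combine via Jensen/H\"older using that the number of good pieces is at most $i_2-i_1$. The differences are only cosmetic: you work with signed increments where the paper uses $|v((i+1)/2^{k_0})-v(i/2^{k_0})|$, relying implicitly on the symmetry $v\mapsto -v$. You also state the splitting as the inequality $\DFgpl(\delta,v,[\alpha,\beta])\geq\sum_j\DFgpl(\delta,v,I_j)$, which is slightly more accurate than the paper's equality, since the latter can fail when $[\alpha,\beta]$ contains dyadic intervals of generation below $k_0$; only the inequality is needed.
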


\begin{proof}
First of all, we observe that 
$$\DFgpl(\delta,v,[\alpha,\beta]])=\sum_{i=i_1} ^{i_2-1} \DFgpl(\delta,v,[i/2^{k_0},(i+1)/2^{k_0}]]).$$

Now we can apply (\ref{th:est_DF}) in all the $k_0$-dyadic intervals in the right-hand side for which $\lambda< 2^{(k_0+1)(1+\gamma/p)}|v((i+1)/2^{k_0})-v(i/2^{k_0})|$, so we obtain that
$$\DFgpl(\delta,v,[\alpha,\beta]])\geq \sum_{i=i_1} ^{i_2-1} 2^{\gamma}\cdot\frac{1}{\lambda^p\delta^{p+\gamma}}\cdot\frac{|v((i+1)/2^{k_0})-v(i/2^{k_0})|^p}{2^{-k_0(p-1)}} \chi(i),$$
where
$$\chi(i)=\begin{cases}
1&\text{if }\lambda< 2^{(k_0+1)(1+\gamma/p)}|v((i+1)/2^{k_0})-v(i/2^{k_0})|,\\
0&\text{if }\lambda\geq 2^{(k_0+1)(1+\gamma/p)}|v((i+1)/2^{k_0})-v(i/2^{k_0})|.
\end{cases}$$

Let us set $N:=|\{i\in\{i_1,\dots i_2\}:\chi(i)=1\}|$, so Jensen inequality yields
\begin{align*}
\sum_{i=i_1} ^{i_2-1} \frac{|v((i+1)/2^{k_0}-v(i/2^{k_0})|^p}{2^{-k_0(p-1)}}\chi(i)
&=\sum_{i=i_1} ^{i_2-1} \left(\frac{|v((i+1)/2^{k_0})-v(i/2^{k_0})|}{2^{-k_0}}\right)^p 2^{-k_0}\chi(i)\\
&\geq \left(\sum_{i=i_1} ^{i_2-1} \frac{|v((i+1)/2^{k_0})-v(i/2^{k_0})|}{N 2^{-k_0}}\chi(i)\right)^p N 2^{-k_0}\\
&=\frac{\left(\sum_{i=i_1} ^{i_2-1} |v((i+1)/2^{k_0})-v(i/2^{k_0})|\chi(i)\right)^p}{(N 2^{-k_0})^{p-1}},
\end{align*}
and we conclude by observing that $N2^{-k_0}\leq (i_2-i_1)2^{-k_0}=\beta-\alpha$ and
\begin{align*}
\sum_{i=i_1} ^{i_2-1} |v((i+1)/2^{k_0})-v(i/2^{k_0})|\chi(i)
&\geq \sum_{i=i_1} ^{i_2-1} \left\{|v((i+1)/2^{k_0})-v(i/2^{k_0})| - \lambda 2^{-(k_0+1)(1+\gamma/p)}\right\}\\
&\geq B-A-(i_2-i_1)\lambda 2^{-(k_0+1)(1+\gamma/p)}.
\end{align*}

Combining all the previous estimates we get (\ref{th:est_DF_error}).
\end{proof}

\section{Gamma-liminf estimate: proof of Theorem~\ref{thm:gamma-liminf}}\label{sec:gamma-liminf}

In this section we exploit the dyadic theory of section~\ref{sec:gatto} to prove Theorem~\ref{thm:gamma-liminf}. The strategy is quite classical, and very close to \cite[Section~3]{GP-gammaliminf}, so we limit ourselves to provide sketches of the proofs, in order to avoid excessive copy-paste from \cite{GP-gammaliminf}.

In the rest of the paper, we denote by $\Leb^N$ the $N$-dimensional Lebesgue measure.

\begin{prop}[Lower bound for the cell problem]\label{prop:gatto}

Let $p\geq 1$, $\gamma<-p$, and $B\geq A$ be real numbers. Let $k_0\in\N$ be a natural number, let $(a,b)\subseteq\R$ be an interval with length $b-a\geq 3 \cdot 2^{-k_0}$, and let $u:(a,b)\to [A,B]$ be a measurable function. 

Let us assume that there exists a real number $\ep>0$ such that
\begin{equation}\label{hp:density_ep}
\Leb^1\bigl(\{ x\in (a,a+2^{-k_0}): u(x)\neq A \}\bigr)+
\Leb^1\bigl(\{ x\in (b-2^{-k_0},b): u(x)\neq B \}\bigr)\leq 
\frac{\ep}{2}\cdot 2^{-k_0}.
\end{equation}

Then for every real number $\lambda>0$ such that
$$\lambda 2^{k_0}(b-a)2^{-(k_0+1)(1+\gamma/p)} < B-A,$$
it holds that
\begin{equation}\label{th:gatto}
\Fgpl (u,(a,b))\geq (1-\ep) 2^{1+\gamma} \log 2 \cdot\frac{(B-A-\lambda 2^{k_0}(b-a)2^{-(k_0+1)(1+\gamma/p)})^p}{(b-a)^{p-1}},
\end{equation}
\end{prop}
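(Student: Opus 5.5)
The natural route is to combine the dyadic representation of Proposition~\ref{prop:dyadic_repr} with the discrete lower bound of Corollary~\ref{cor:est_DF}, after choosing, for each $\delta$ and $x$, a dyadic window on which the rescaled function $v_{x,\delta}$ takes the boundary values $A$ and $B$.

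First, for $\delta\in(1/2,1)$ and $x\in(0,\delta)$, I will look at the dyadic points of level $k_0$ in the rescaled variable $q$, namely $q=i/2^{k_0}$, corresponding to the points $x+\delta i 2^{-k_0}$ of $(a,b)$. Let $q_1$ be the smallest such point whose image lies in $(a,a+2^{-k_0})$, and $q_2$ the largest one whose image lies in $(b-2^{-k_0},b)$. Since $\delta\le 1$, the spacing $\delta 2^{-k_0}$ is at most the window length $2^{-k_0}$, so each end window contains at least one such point. The assumption $b-a\ge 3\cdot 2^{-k_0}$ guarantees $q_1<q_2$.

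Set $\alpha=q_1$ and $\beta=q_2$, with $\alpha=i_1/2^{k_0}$ and $\beta=i_2/2^{k_0}$. Then $\beta-\alpha\le (b-a)/\delta$ and $i_2-i_1\le 2^{k_0}(b-a)/\delta$. I will call the pair $(x,\delta)$ \emph{good} when $u$ equals $A$ at $x+\delta\alpha$ and equals $B$ at $x+\delta\beta$.

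On good pairs, I apply Corollary~\ref{cor:est_DF} to $v_{x,\delta}$ restricted to $[\alpha,\beta]$. This is legitimate because $u$ takes values in $[A,B]$. The error term is $(i_2-i_1)\lambda 2^{-(k_0+1)(1+\gamma/p)}$. I intend to bound it by $\lambda 2^{k_0}(b-a)2^{-(k_0+1)(1+\gamma/p)}$, and this is where the one delicate point appears. The crude bound on $i_2-i_1$ carries a factor $1/\delta\le 2$, whereas the statement has no such factor. So I will count lattice points more carefully: points spaced $\delta 2^{-k_0}$ lying strictly inside $(a,b)$ number at most about $(b-a)2^{k_0}/\delta$. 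If the factor $1/\delta$ cannot be removed, the constant in the statement would need adjusting.

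A cleaner way to see this is to observe that the error term in Corollary~\ref{cor:est_DF} comes from discarding the small $k_0$-intervals. Each discarded interval has $|\Delta v|\le \lambda(\delta 2^{-k_0})^{1+\gamma/p}$, and this bound is at most $\lambda 2^{-(k_0+1)(1+\gamma/p)}$. So only the count of intervals matters. I will accept whichever count, namely the number of level-$k_0$ steps between the end windows, yields the stated bound.

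For the main term, $(\beta-\alpha)^{p-1}\le((b-a)/\delta)^{p-1}$. Together with the factor $\lambda^{-p}\delta^{-p-\gamma}$ from the corollary and the prefactor $2\lambda^p\delta^{\gamma-1}$ from Proposition~\ref{prop:dyadic_repr}, this gives an integrand bounded below by
\[
2^{1+\gamma}\,\delta^{-1}\,\frac{(B-A-\mathrm{err})^p}{(b-a)^{p-1}}\,\chi_{\mathrm{good}}(x,\delta).
\]

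It remains to show that the good set has measure at least $(1-\ep)\delta$ in $x$ for each $\delta$. Then $\int_{1/2}^1 \delta^{-1}\,d\delta=\log 2$, which produces the constant $(1-\ep)2^{1+\gamma}\log 2$. For fixed $\delta$, as $x$ ranges over $(0,\delta)$, the point $x+\delta\alpha$ sweeps a set inside $(a,a+2^{-k_0})$ with Jacobian $1$. It is only piecewise injective, since $\alpha$ jumps as $x$ varies, but each point is covered at most about $2^{k_0}\delta$ times. Hence the bad measure is at most a constant multiple of $2^{k_0}$ times the bad measure in the window, and by (\ref{hp:density_ep}) this is at most $\ep\delta/2$ per end, up to constants. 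Precisely matching the factor $\ep$ is a bookkeeping issue of the same kind as the error count above.

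In sum, the two steps I expect to be the obstacle are the measure estimate for bad pairs $(x,\delta)$ and the exact count $i_2-i_1$ in the error term. Everything else follows directly from Proposition~\ref{prop:dyadic_repr} and Corollary~\ref{cor:est_DF}.
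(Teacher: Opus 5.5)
Your route is the paper's: the same window $[\alpha_{x,\delta},\beta_{x,\delta}]$, Corollary~\ref{cor:est_DF} on good pairs, Proposition~\ref{prop:dyadic_repr}, and then $\int_{1/2}^1 d\delta/\delta=\log 2$. The measure estimate you leave ``up to constants'' closes with no loss. Set $h:=\delta 2^{-k_0}$. The map $x\mapsto x+\delta\alpha_{x,\delta}$ selects the first point of $x+h\Z$ to the right of $a$. It is $h$-periodic, and on each period it is a translation modulo $h$ onto $(a,a+h]\subseteq(a,a+2^{-k_0}]$. Since $(0,\delta)$ consists of exactly $2^{k_0}$ periods, each point is covered exactly $2^{k_0}$ times, not ``about $2^{k_0}\delta$''; the same holds at the right end. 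The hypothesis bounds the two bad sets \emph{jointly} by $\frac{\ep}{2}2^{-k_0}$, so the bad $x$'s in $(0,\delta)$ have measure at most $\ep/2\le\ep\delta$, because $\delta\ge 1/2$. Also fix an exponent slip. After using $(\beta-\alpha)^{p-1}\le((b-a)/\delta)^{p-1}$, the integrand carries $\delta^{-2}$, not $\delta^{-1}$. It is $\Leb^1(X_\delta)\,\delta^{-2}\ge(1-\ep)\delta^{-1}$ that produces $\log 2$.

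Your worry about $i_2-i_1$ is justified, and it is not a bookkeeping issue. The paper simply asserts $i_2-i_1\le 2^{k_0}(b-a)$. The correct bound is $i_2-i_1=2^{k_0}(\beta_{x,\delta}-\alpha_{x,\delta})\le 2^{k_0}(b-a)/\delta$, and it is essentially attained. For example, take $k_0=0$, $(a,b)=(0,3)$, $x$ near $1/4$ and $\delta$ slightly above $1/2$. Then $\alpha_{x,\delta}=0$ and $\beta_{x,\delta}=5$, so $i_2-i_1=5>3$ on a set of $(x,\delta)$ of positive measure. Using the sharper threshold $\lambda(\delta 2^{-k_0})^{1+\gamma/p}$ for the discarded $k_0$-intervals does not help, since at $\delta=1/2$ it equals $\lambda 2^{-(k_0+1)(1+\gamma/p)}$. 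Write $E_0:=\lambda 2^{k_0}(b-a)2^{-(k_0+1)(1+\gamma/p)}$. Neither your argument nor the paper's proves (\ref{th:gatto}) as stated. What they prove is the version with $E_0$ replaced by $2E_0$, in the hypothesis on $\lambda$ as well as in the conclusion; alternatively, keep the conclusion with $(B-A-2E_0)_+^p$. Rather than deferring to ``whichever count yields the stated bound'', state and prove that corrected version. It is all that is needed downstream: in the proof of Theorem~\ref{thm:gamma-liminf}, $k_0$ is fixed while $\lambda\to0^+$, so the error term vanishes anyway.
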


\begin{proof}
Let us fix $\delta \in [1/2,1]$ and $x\in (0,\delta)$. Let us consider the interval $(a_{x,\delta},b_{x,\delta})$ whose endpoints are defined by (\ref{defn:abxd}), and let us denote by $[\alpha_{x,\delta},\beta_{x,\delta}]$ the largest closed interval contained in $(a_{x,\delta},b_{x,\delta})$ that can be written as $[i_1/2^{k_0},i_2/2^{k_0}]$ for some integers $i_1<i_2$, namely the interval with endpoints
\begin{equation*}
    \alpha_{x,\delta}:=2^{-k_0}\lfloor 2^{k_0} a_{x,\delta}\rfloor+2^{-k_0}
    \qquad\text{and}\qquad
    \beta_{x,\delta}:=2^{-k_0}\lceil 2^{k_0}b_{x,\delta}\rceil-2^{-k_0}.
\end{equation*}
Note that $\beta_{x,\delta}\geq\alpha_{x,\delta}+2^{-k_0}$ because we assumed that $b-a\geq 3\cdot 2^{-k_0}$, and that
\begin{equation}\label{est:i_2-i_1}
    i_2-i_1\leq 2^{k_0}(b-a)
\end{equation}

Let us set also $v_{x,\delta}(y):=u(x+ y\delta)$ as in (\ref{defn:v_x,delta}) and let $X_\delta$ denote the sets of points $x\in(0,\delta)$ such that
\begin{equation*}
v_{x,\delta}(\alpha_{x,\delta})=u(x+\alpha_{x,\delta}\delta)=A
\qquad \text{and} \qquad
v_{x,\delta}(\beta_{x,\delta})=u(x+\beta_{x,\delta}\delta)=B.
\end{equation*}

We observe that the two maps $\psi_1(x):=x+\alpha_{x,\delta}\delta$ and $\psi_2(x):=x+\beta_{x,\delta}\delta$ are measure preserving from $(0,2^{-k_0}\delta)$ to $(a,a+2^{-k_0}\delta)$ and from $(0,2^{-k_0}\delta)$ to $(b-2^{-k_0}\delta,b)$, respectively, and are both $2^{-k_0}\delta$-periodic. Thus, we deduce that
\begin{equation*}
    \Leb^1\bigl(\{x\in(0,\delta):v_{x,\delta}(\alpha_{x,\delta})\neq A\}\bigr)= 2^{k_0}
    \Leb^1\bigl(\{y\in(a,a+2^{-k_0}\delta):u(y)\neq A\}\bigr),
\end{equation*}
and
\begin{equation*}
    \Leb^1\bigl(\{x\in(0, \delta):v_{x,\delta}(\beta_{x,\delta})\neq B\}\bigr)=2^{k_0}
    \Leb^1\bigl(\{y\in(b-2^{-k_0}\delta,b):u(y)\neq B\}\bigr).
\end{equation*}

Therefore, since
\begin{equation*}
    (0,\delta)\setminus X_{\delta}=
    \left\{x\in(0,\delta):v_{x,\delta}(\alpha_{x,\delta})\neq A\right\}\cup
    \left\{x\in(0,\delta):v_{x,\delta}(\beta_{x,\delta})\neq B\right\},
\end{equation*}
from assumption (\ref{hp:density_ep}) we deduce that
\begin{align*}
    \lefteqn{\Leb^1\bigl((0,\delta)\setminus X_{\delta}\bigr)}\\
    &\quad \leq 2^{k_0} 
    \Leb^1\bigl(\{y\in(a,a+2^{-k_0}\delta):u(y)\neq A\}\cup \{y\in(b-2^{-k_0}\delta,b):u(y)\neq B\}\bigr)
    \\
    &\quad \leq 
    2^{k_0} \Leb^1\bigl(\{y\in(a,a+2^{-k_0}):u(y)\neq A\}\bigr)+
    2^{k_0} \Leb^1\bigl(\{y\in(b-2^{-k_0},b):u(y)\neq B\}\bigr)
    \\
    &\quad \leq 
    \frac{\ep}{2},
\end{align*}
and hence, recalling that $\delta\geq 1/2$,
\begin{equation}\label{leb_Xdelta}
\Leb^1(X_\delta)\geq \delta - \frac{\ep}{2} \geq (1-\ep)\delta.
\end{equation}

Now, keeping (\ref{est:i_2-i_1}) into account, we observe that for every $x$ in $X_\delta$ the function $v_{x,\delta}$ and the parameter $\lambda$ satisfy the assumptions of Corollary~\ref{cor:est_DF} with $[\alpha,\beta]:=[\alpha_{x,\delta},\beta_{x,\delta}]$, so we obtain that
\begin{equation}
\DFgpl(\delta,v_{x,\delta},[\alpha_{x,\delta},\beta_{x,\delta}])\geq 
2^\gamma \cdot\frac{1}{\lambda^p \delta^{p+\gamma}}
\cdot\frac{(B-A-\lambda 2^{k_0}(b-a)2^{-(k_0+1)(1+\gamma/p)})^p}{(\beta_{x,\delta}-\alpha_{x,\delta})^{p-1}}.
\label{est:DF-abxd}
\end{equation}

Since
\begin{equation*}
\beta_{x,\delta}-\alpha_{x,\delta}\leq 
b_{x,\delta}-a_{x,\delta} = 
\frac{b-a}{\delta},
\end{equation*}
from (\ref{est:DF-abxd}) we conclude that 
\begin{equation*}
\DFgpl(\delta,v_{x,\delta},(a_{x,\delta},b_{x,\delta}))\geq 
2^\gamma \cdot\frac{1}{\lambda^p \delta^{1+\gamma}}
\cdot\frac{(B-A-\lambda 2^{k_0}(b-a)2^{-(k_0+1)(1+\gamma/p)})^p}{(b-a)^{p-1}}
\end{equation*}
for every $\delta\in[1/2,1]$ and every $x\in X_\delta$. At this point, we exploit Proposition~\ref{prop:dyadic_repr} so we obtain that
\begin{align*}
\Fgpl(u,(a,b)) & \geq  
2\lambda^p \int_{1/2} ^1  d\delta \, \delta^{\gamma-1} \int_{0} ^{\delta} \DFgpl\left(\delta,v_{x,\delta},(a_{x,\delta}, b_{x,\delta}) \right)\,dx
\\
& \geq 
2\lambda^p \int_{1/2} ^1  d\delta \, \delta^{\gamma-1} \int_{X_\delta} \DFgpl\left(\delta,v_{x,\delta},(a_{x,\delta}, b_{x,\delta}) \right)\,dx
\\
& \geq 
2^{1+\gamma}\cdot\frac{(B-A-\lambda 2^{k_0}(b-a)2^{-(k_0+1)(1+\gamma/p)})^p}{(b-a)^{p-1}} 
\int_{1/2}^{1}\frac{\Leb^1(X_\delta)}{\delta^2}\,d\delta.
\end{align*}

Recalling (\ref{leb_Xdelta}) we get that
$$\int_{1/2}^{1}\frac{\Leb^1(X_\delta)}{\delta^2}\,d\delta\geq (1-\ep)\int_{1/2}^1 \frac{d\delta}{\delta}=(1-\ep)\log 2,$$
so we obtain exactly (\ref{th:gatto}).
\end{proof}

At this point, the proof of Theorem~\ref{thm:gamma-liminf} is rather standard.

\begin{proof}[Proof of Theorem~\ref{thm:gamma-liminf}]

Let us start with the proof in the case $N=1$ and $\Omega=(a,b)$. To this end, we consider any function $u \in L^1 _{\loc}((a,b))$, and any family $\{u_{\lambda}\} \subseteq L^1 _{\loc} ((a,b))$ such that $u_{\lambda}\to u$ in $L^1 _{\loc} ((a,b))$ as $\lambda\to +\infty$. Since $C_{1,p}=2$ for every $p\geq 1$, in this case we need to prove that
\begin{equation}
\liminf_{\lambda\to +\infty} \Fgpl(u_\lambda,(a,b))\geq 2 \cdot 2^\gamma\log 2\cdot F_p(u,(a,b)),
\label{th:1D}
\end{equation}
that is (\ref{th:liminf_ineq}) in this case.

Let $c<d$ be Lebesgue points of $u$ in $(a,b)$. Then we claim that
\begin{equation}\label{liminf_leb_pts}
\liminf_{\lambda\to +\infty} \Fgpl(u_{\lambda},(c,d)) \geq
2^{1+\gamma}\log 2 \cdot \frac{|u(d)-u(c)|^p}{(d-c)^{p-1}}.
\end{equation}

To prove this, we can assume without loss of generality that $u(c)<u(d)$ so that, if we fix some small $\ep>0$ and we replace every $u_\lambda$ with its truncation
$$u_{\lambda,\ep}(x):=\max\{\min\{u_\lambda(x),u(d)-\ep\},u(c)+\ep\},$$
whose energy is smaller than the original $u_\lambda$, then the assumptions of Proposition~\ref{prop:gatto} are satisfied with $A:=u(c)+\ep$, $B:=u(d)-\ep$, some integer $k_0$ depending on $\ep$ and $u$, and for every $\lambda$ sufficiently small, so passing to the limit $\lambda\to 0^+$ in (\ref{th:gatto}) and then letting $\ep \to 0^+$, we get (\ref{liminf_leb_pts}).

At this point, we can take more Lebesgue points $x_1<\dots <x_m$ for $u$ in $(a,b)$ and applying the (\ref{liminf_leb_pts}) in each interval $(x_i,x_{i+1})$ we deduce that
\begin{align*}
\liminf_{\lambda\to +\infty} \Fgpl(u_{\lambda},(a,b))&\geq
\liminf_{\lambda\to +\infty} \sum_{i=1} ^{m-1} \Fgpl(u_{\lambda},(x_{i},x_{i+1})) \\
&\geq
2^{1+\gamma}\log 2 \sum_{i=1} ^{m-1} \frac{(u(x_{i+1})-u(x_{i}))^p}{(x_{i+1}-x_{i})^{p-1}}.
\end{align*}

This concludes the proof of (\ref{th:1D}), because the supremum of the sum in the right-hand side over all the possible choices of an integer $m\geq 2$ and of $m$ Lebesgue points $x_1<\dots <x_m$ is exactly $F_p(u,(a,b))$.

The case of a general open set $\Omega\subseteq \R$ is then proved by approximation with finite unions of intervals contained in $\Omega$, exactly as in \cite{GP-gammaliminf}.

Finally, the result can be extended to the general case $\Omega\subseteq \R^N$ with $N>1$, exploiting the usual slicing technique used, for example, in \cite{2020-APDE-AGMP,2023-Picenni,2023-Lahti,GP-gammaliminf}, without any relevant modification.

However, since we will also need this in the next section, let us introduce some notation and recall the main ideas.

Let $\sigma\in \s^{N-1}$ be a unit vector, and let $\sigma^\perp$ be the subspace of $\R^N$ orthogonal to $\sigma$. Let $\Omega_\sigma \subseteq \sigma^\perp$ denote the orthogonal projection of $\Omega$ onto $\sigma^\perp$.

For every $z\in \Omega_\sigma$ we set $\Omega_{\sigma,z}:=\{t\in\R: z+\sigma t \in \Omega \}$, and we consider the one-dimensional function $u_{\sigma,z}:\Omega_{\sigma,z} \to \R$ defined by $u_{\sigma,z}(t):=u(z+\sigma t)$.

Then it turns out that both the functionals (\ref{defn:Fgpl}) and (\ref{defn:Fp}) can be computed as an average of their one-dimensional versions computed over all one-dimensional slices $u_{\sigma,z}$, namely it holds that (see \cite{AFP})
\begin{equation}\label{sectioning_norm}
\int_{\s^{N-1}} d\mathcal{H}^{N-1}(\sigma) 
\int_{\Omega_\sigma} F_p(u_{\sigma,z}, \Omega_{\sigma,z}) \,dz = 
C_{N,p}\cdot F_p (u,\Omega),
\end{equation}
and (see \cite{2023-Picenni,GP-gammaliminf} or the general formula in \cite[page~622]{2020-APDE-AGMP})
\begin{equation}\label{sectioning_functionals}
\Fgpl(u,\Omega)=
\frac{1}{2}\int_{\s^{N-1}} d\mathcal{H}^{N-1}(\sigma) 
\int_{\Omega_\sigma} \Fgpl(u_{\sigma,z},\Omega_{\sigma,z})\,dz,
\end{equation}
for every $u\in L^1_\loc(\Omega)$ and every $\gamma\in\R$, $p\geq 1$, $\lambda>0$.

Then (\ref{th:liminf_ineq}) follows from the previous identities and Fatou lemma, since
\begin{align*}
\liminf_{\lambda\to 0^+}\Fgpl(u_\lambda,\Omega)&\geq
\frac{1}{2}\int_{\s^{N-1}} d\mathcal{H}^{N-1}(\sigma) 
\int_{\Omega_\sigma} \liminf_{\lambda\to 0^+} \Fgpl(u_{\sigma,z},\Omega_{\sigma,z})\,dz\\
&\geq 2^{\gamma} \log 2 \int_{\s^{N-1}} d\mathcal{H}^{N-1}(\sigma) 
\int_{\Omega_\sigma} F_p(u_{\sigma,z}, \Omega_{\sigma,z})\,dz\\
&=C_{N,p}\cdot 2^\gamma\log 2 \cdot F_p (u,\Omega).
\end{align*}
\end{proof}

\begin{rmk}\label{rmk:convergence}
\begin{em}
We point out that the only properties of $L^1_{\loc}(\Omega)$ convergence that we need is that it can be transferred to almost every one-dimensional slice (at least on a subsequence) and that in the one-dimensional case it implies that the assumption (\ref{hp:density_ep}) holds for every $u_{\lambda,\ep}$ with $\lambda$ small enough, whenever the endpoints are Lebesgue points for $u$. As a consequence, Theorem~\ref{thm:gamma-liminf} holds also if we replace $L^1_{\loc}$ convergence with weaker notions like convergence in measure or pointwise (almost everywhere) convergence.
\end{em}
\end{rmk}

\section{Poincaré inequality}\label{sec:Poincaré}

This section is devoted to the study of the Poincaré inequality for the functionals (\ref{defn:Fgpl}) in the two regimes $\gamma<-p$ and $\gamma>0$.

\subsection{The case $\gamma<-p$: proof of Theorem~\ref{thm:poincaré_<-p}}

In this subsection we prove Theorem~\ref{thm:poincaré_<-p}. The argument relies again on the dyadic representation and the lower bound for the dyadic functional. However, in order to apply the dyadic estimate, we need to ensure that the assumptions are verified for sufficiently many dyadic intervals. To this end, we need the following lemma.

\begin{lemma}\label{lemma:meas_theory}
Let $\Omega\subset \R^N$ be a measurable set with finite measure, and let $p\geq 1$ be a real number. Let also $u\in L^p(\Omega)$ be a function and let us set
\begin{equation}\label{defn:phi_u}
\phi_u(x,y):=\begin{cases}
1&\text{\emph{if} }|u(y)-u(x)|^p > \frac{1}{4\Leb^N(\Omega)^2}\iint_{\Omega^2}|u(y')-u(x')|^p\,dx'dy',\\
0&\text{\emph{if} }|u(y)-u(x)|^p\leq \frac{1}{4\Leb^N(\Omega)^2}\iint_{\Omega^2}|u(y')-u(x')|^p\,dx'dy',
\end{cases}
\end{equation}

Then there exists a constant $r_0>0$, depending on $p$ and on $\Leb^N(\Omega)$, such that
$$\iint_{\Omega^2}|u(y)-u(x)|^p\,dxdy \leq 4 \iint_{\Omega^2} |u(y)-u(x)|^p \mathbbm{1}_{\{|y-x|>r_0\}}(x,y)\phi_u(x,y)\,dxdy.$$
\end{lemma}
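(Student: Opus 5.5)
The plan is to split the double integral $I:=\iint_{\Omega^2}|u(y)-u(x)|^p\,dxdy$ into three regions and show that the two "bad" regions each carry at most a fixed fraction of $I$. Then the remaining region, where $|y-x|>r_0$ and $\phi_u=1$, must carry the rest.

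Since $u\in L^p(\Omega)$ and $\Leb^N(\Omega)<\infty$, the quantity $I$ is finite. If $I=0$ there is nothing to prove, so we may assume $0<I<\infty$. We write
$$I\le \iint_{\Omega^2}|u(y)-u(x)|^p(1-\phi_u)\,dxdy+\iint_{\{|y-x|\le r_0\}}|u(y)-u(x)|^p\,dxdy+\iint_{\Omega^2}|u(y)-u(x)|^p\mathbbm{1}_{\{|y-x|>r_0\}}\phi_u\,dxdy .$$

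\emph{First term.} Where $\phi_u=0$ the integrand is at most $I/(4\Leb^N(\Omega)^2)$. Integrating over $\Omega^2$ bounds this term by $I/4$.

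\emph{Second term.} Let $m$ be the mean of $u$ on $\Omega$. By convexity of $|\cdot|^p$ and Jensen's inequality in the variable $y$,
$$\int_\Omega|u(x)-m|^p\,dx\le \frac{1}{\Leb^N(\Omega)}\,I .$$
Using $|u(y)-u(x)|^p\le 2^{p-1}\bigl(|u(y)-m|^p+|u(x)-m|^p\bigr)$ and Fubini, we integrate over pairs with $|y-x|\le r_0$. For fixed $x$, the set of admissible $y$ has measure at most $\omega_N r_0^N$, where $\omega_N$ is the measure of the unit ball. This gives
$$\iint_{\{|y-x|\le r_0\}}|u(y)-u(x)|^p\,dxdy\le 2^{p}\,\omega_N r_0^N\int_\Omega|u-m|^p\,dx\le \frac{2^p\omega_N r_0^N}{\Leb^N(\Omega)}\, I .$$
We choose $r_0$ so that $2^p\omega_N r_0^N=\Leb^N(\Omega)/4$. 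Then this term is at most $I/4$, and $r_0$ depends only on $p$ and $\Leb^N(\Omega)$ (and on the dimension $N$, which is fixed).

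\emph{Conclusion.} Combining the three bounds gives $I\le I/2+\iint|u(y)-u(x)|^p\mathbbm{1}_{\{|y-x|>r_0\}}\phi_u$. Since $I<\infty$, this yields $I\le 2\iint(\dots)$, which is even better than the claimed constant $4$.

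The only step needing an idea is the second one: the short-range pairs must be controlled uniformly in $u$. Passing through the mean $m$ and the Jensen bound $\int|u-m|^p\le I/\Leb^N(\Omega)$ is what makes this uniform. The remaining steps are routine.
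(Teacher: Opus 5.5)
Your proof is correct and follows essentially the same route as the paper: a triangle inequality through an intermediate value plus a ball-volume bound for the pairs with $|y-x|\le r_0$, then the trivial bound on the pairs where $\phi_u=0$. The only differences are that you pass through the mean $m$ rather than averaging over an auxiliary point $z\in\Omega$, and that you choose $r_0$ slightly smaller and do a single three-way split, which yields the sharper constant $2$ in place of the paper's $4$.
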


\begin{proof}
We observe that for every $z\in\Omega$ it holds that
$$|u(y)-u(x)|^p\leq 2^{p-1}\left(|u(y)-u(z)|^p+|u(x)-u(z)|^p\right),$$
and therefore, integrating over $\Omega$ with respect to the variable $z$, we get
$$|u(y)-u(x)|^p\leq \frac{2^{p-1}}{\Leb^N(\Omega)}\int_\Omega\left(|u(y)-u(z)|^p+|u(x)-u(z)|^p\right)dz,$$
and then, fixing $r>0$, and integrating over the set where $|y-x|<r$ with respect to the $(x,y)$ variables, we obtain that
\begin{align*}
\lefteqn{\iint_{\Omega^2}|u(y)-u(x)|^p\mathbbm{1}_{\{|y-x|<r\}}(x,y)\,dxdy} \\
&\qquad\leq \frac{2^{p-1}}{\Leb^N(\Omega)} \iint_{\Omega^2} \mathbbm{1}_{\{|y-x|<r\}}(x,y)\,dxdy \int_\Omega\left(|u(y)-u(z)|^p+|u(x)-u(z)|^p\right)dz,\\
&\qquad = 2\cdot \frac{2^{p-1}}{\Leb^N(\Omega)}\int_{\Omega}dx \int_\Omega |u(x)-u(z)|^p\,dz \int_{\Omega}\mathbbm{1}_{\{|y-x|<r\}}(x,y)\,dy,\\
&\qquad\leq \frac{2^{p}\Leb^N(B_r)}{\Leb^N(\Omega)}\iint_{\Omega^2}|u(x)-u(z)|^p\,dxdz.
\end{align*}

Therefore, if we fix $r_0>0$ in such a way that the measure of the $N$-dimensional ball with radius equal to $r_0$ satisfies
\begin{equation}\label{defn:r_0}
\Leb^N(B_{r_0})\leq \frac{\Leb^N(\Omega)}{2^{p+1}},
\end{equation}
then we obtain that
$$\iint_{\Omega^2}|u(y)-u(x)|^p\mathbbm{1}_{\{|y-x|<r_0\}}(x,y)\,dxdy\leq \frac{1}{2}\iint_{\Omega^2}|u(y)-u(x)|^p\,dxdy,$$
and hence we deduce that
\begin{equation}\label{est:lemma_meas_1}
\iint_{\Omega^2}|u(y)-u(x)|^p\mathbbm{1}_{\{|y-x|>r_0\}}(x,y)\,dxdy\geq \frac{1}{2}\iint_{\Omega^2}|u(y)-u(x)|^p\,dxdy.
\end{equation}

Finally, we observe that
\begin{align}
\lefteqn{\iint_{\Omega^2}|u(y)-u(x)|^p\mathbbm{1}_{\{|y-x|>r_0\}}(x,y)(1-\phi_u(x,y))\,dxdy} \nonumber\\
&\qquad\qquad\leq \iint_{\Omega^2}\left(\frac{1}{4\Leb^N(\Omega)^2}\iint_{\Omega^2}|u(y')-u(x')|^p\,dy'dx'\right) dxdy \nonumber\\
&\qquad\qquad\leq \frac{1}{4}\iint_{\Omega^2}|u(y)-u(x)|^p\,dxdy.\label{est:lemma_meas_2}
\end{align}
and hence, combining (\ref{est:lemma_meas_1}) and (\ref{est:lemma_meas_2}), we deduce that
$$\iint_{\Omega^2}|u(y)-u(x)|^p\mathbbm{1}_{\{|y-x|>r_0\}}(x,y)\phi_u(x,y)\,dxdy\geq \frac{1}{4}\iint_{\Omega^2}|u(y)-u(x)|^p\,dxdy,$$
as required.
\end{proof}

We can now proceed with the proof of Theorem~\ref{thm:poincaré_<-p}, starting again with the case $N=1$, in which we prove the following.

\begin{thm}\label{thm:poincaré_<-p_1d}
Let $(a,b)\subset \R$ be an interval, and let $p\geq 1$ and $\gamma<-p$ be real numbers. Then there exists a constant $K>0$, depending only on $\gamma$ and $p$, such that
\begin{equation}\label{poincaré_1d_<-p}
\iint_{(a,b)^2}|u(y)-u(x)|^p\,dxdy
\leq K\cdot\left((b-a)^{p+1} \Fgpl(u,(a,b)) + \lambda^p (b-a)^{2+p+\gamma}\right),
\end{equation}
for every $u\in L^p((a,b))$ and every $\lambda>0$.
\end{thm}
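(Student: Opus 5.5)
We would first reduce to $(a,b)=(0,1)$ by scaling. If $u_s(t):=u(a+(b-a)t)$, then the left-hand side scales as $(b-a)^2$ times that of $u_s$ on $(0,1)$. Moreover $\Fgpl(u,(a,b))=(b-a)^{1+\gamma}\, F_{\gamma,p,\lambda'}(u_s,(0,1))$ with $\lambda'=\lambda(b-a)^{1+\gamma/p}$, since $|y-x|^{1+\gamma/p}$ picks up that factor and $\nu_\gamma$ scales by $(b-a)^{1+\gamma}$. Hence $\lambda^p\Fgpl(\ldots)$ rescales consistently, and the claimed powers $(b-a)^{p+1}$ and $(b-a)^{2+p+\gamma}$ are exactly what make the inequality on $(0,1)$ equivalent to the general one. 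So it suffices to prove
\[
\iint_{(0,1)^2}|u(y)-u(x)|^p\,dxdy\le K\bigl(\Fgpl(u,(0,1))+\lambda^p\bigr).
\]

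Set $M:=\iint_{(0,1)^2}|u(y)-u(x)|^p$ and apply Lemma~\ref{lemma:meas_theory} with $\Omega=(0,1)$, which gives a fixed $r_0>0$ depending only on $p$. Then $M$ is bounded by four times the integral over pairs with $|y-x|>r_0$ and $|u(y)-u(x)|^p>M/4$. If $\lambda^p\ge cM$ for a small constant $c$ fixed later, we are done trivially. Otherwise $\lambda<(cM)^{1/p}$, and for each such "good" pair $x<y$ we have $|u(y)-u(x)|>(M/4)^{1/p}$ and $y-x\ge r_0$. We then want to lower bound the energy of $u$ on the subinterval $(x,y)$, uniformly in such pairs. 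Since $\gamma<-p$, the threshold $\lambda|y-x|^{1+\gamma/p}\le\lambda r_0^{1+\gamma/p}$ is much smaller than the jump when $c$ is small, so the pair is deep inside $\Egpl$.

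To extract energy from a good pair, we use the dyadic machinery rather than pointwise values, because $u$ is only measurable. One route is to apply Proposition~\ref{prop:gatto}-type reasoning directly: for a.e. $(\delta,x_0)$ the dyadic grid through $x_0$ with step $\delta$ lands on points, and by Corollary~\ref{cor:est_DF} on a $k_0$-dyadic interval $[\alpha,\beta]$ with $k_0$ chosen so that $2^{-k_0}\approx r_0$ (depending only on $p,\gamma$), truncated between the endpoint values, we get $\DFgpl\gtrsim \lambda^{-p}|v(\beta)-v(\alpha)|^p$. This holds provided the error term $(i_2-i_1)\lambda 2^{-(k_0+1)(1+\gamma/p)}$ is at most half the jump, which holds when $c$ is small.

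A cleaner route is to average rather than fix endpoints. By Proposition~\ref{prop:dyadic_repr}, $\Fgpl(u,(0,1))\ge 2\lambda^p\int_{1/2}^1\delta^{\gamma-1}\int_0^\delta \DFgpl\,dx_0\,d\delta$. On each grid we only use the level-$k_0$ points $x_0+\delta i2^{-k_0}$, and on each consecutive pair of them with $|v(\cdot)-v(\cdot)|^p>M/8$ we apply Proposition~\ref{prop:est_DF}, which contributes $\gtrsim\lambda^{-p}M$. It then remains to show that, for a positive-measure set of $(\delta,x_0)$, some pair of grid points separated by $\sim r_0$ has a large jump. Here we use that the good pairs carry mass $\ge M/4$: a change of variables $(\delta,x_0,i,j)\mapsto(x,y)$ maps the grid-point pairs onto a region of pairs with separations comparable to $r_0$ with bounded Jacobian. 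Summing with the triangle inequality across consecutive level-$k_0$ points (at most $2^{k_0}$ of them) converts a large jump between distant grid points into a large jump, at least $1/2^{k_0}$ of the size, between some consecutive pair. This costs only constants depending on $p,\gamma$.

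The main obstacle is this measure-theoretic transfer, namely showing that $\int\!\!\int \#\{\text{consecutive grid pairs with big jump}\}\,dx_0\,d\delta\gtrsim 1$ from the lower bound on the good-pair integral. The integrand is $|u(y)-u(x)|^p\phi_u$ rather than an indicator, so we would bound $|u(y)-u(x)|^p$ from above using only the mass of the set of good pairs. We would try first a Chebyshev-type splitting. Pairs where $|u(y)-u(x)|^p\le LM$ carry most of the good mass, and the remainder is controlled by $\|u-\bar u\|_p^p\sim M$. This should yield a good-pair set of measure $\gtrsim1/L$ with jumps $\ge (M/4)^{1/p}$, and then Proposition~\ref{prop:est_DF} gives $\Fgpl\gtrsim M$.
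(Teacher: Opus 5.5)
Your argument breaks down at the final step. The cleaner route credits a grid $(\delta,x_0)$ with a fixed amount $\gtrsim\lambda^{-p}M$ as soon as it contains a pair of level-$k_0$ points whose jump exceeds the threshold $(M/4)^{1/p}$. It therefore needs the set $G:=\{(x,y):|y-x|>r_0,\ \phi_u(x,y)=1\}$ to have measure bounded below by a constant depending only on $p$ and $\gamma$. That is false, and the Chebyshev splitting cannot produce it. Chebyshev bounds the \emph{measure} of $\{|u(y)-u(x)|^p>LM\}$ by $1/L$, not its \emph{share} of $\iint_G|u(y)-u(x)|^p$, and without uniform integrability that share can be everything. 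For $u=h\mathbbm{1}_{(0,\varepsilon)}$ on $(0,1)$:
\begin{itemize}
\item $M=2\varepsilon(1-\varepsilon)h^p$;
\item every pair in $G$ has $|u(y)-u(x)|^p=h^p>LM$ as soon as $2L\varepsilon<1$;
\item $\Leb^2(G)\le 2\varepsilon$, and only a set of grids of measure $O(2^{k_0}\varepsilon)$ has a point in $(0,\varepsilon)$.
\end{itemize}
So threshold-and-count yields at best $\Fgpl\gtrsim 2^{k_0}\varepsilon M$, which degenerates as $\varepsilon\to0$. Your first route leaves the same step open. Moreover, Proposition~\ref{prop:gatto} itself is not usable there, since its hypothesis (\ref{hp:density_ep}) is not available at arbitrary pairs for a fixed measurable $u$.

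The missing idea is to keep the true size of each jump. Proposition~\ref{prop:est_DF} is homogeneous in $(B-A)^p$. The quantities $r_0$, $\phi_u$ and the smallness of $\lambda$ should be used only to verify its hypothesis $\lambda<2^{(k+1)(1+\gamma/p)}(B-A)$, never to count pairs.

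The paper does this pointwise in $(x,y)$. For $x<y$ it anchors the grid at $x$ with step $\delta(y-x):=2^{k(y-x)}(y-x)\in(1/2,1]$, where $k(t)=\lfloor\log_2(1/t)\rfloor$. Then $[0,2^{-k(y-x)}]$ is a single dyadic interval whose endpoint values are exactly $u(x)$ and $u(y)$. Writing $k=k(y-x)$ and $\delta=\delta(y-x)$, Proposition~\ref{prop:est_DF} gives $|u(y)-u(x)|^p\mathbbm{1}_{\{y-x>r_0\}}\phi_u\le 2^{-\gamma}\lambda^p\delta^{p+\gamma}2^{-k(p-1)}\DFgpl(\delta,v_{x,\delta},[0,2^{-k}])$. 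One then integrates in $x$ and $t=y-x$, folds $t$ into $(1/2,1]$ via $\delta(2^{-k}t)=\delta(t)$, and uses the $t$-periodicity in $x$. This lands on the right-hand side of Proposition~\ref{prop:dyadic_repr} and gives $M\le 2^{4-\gamma}\Fgpl(u,(0,1))$ in the small-$\lambda$ case.

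Your level-$k_0$ change of variables $(\delta,x_0,i,j)\mapsto(x,y)$ can be repaired in the same spirit, but not through a lower bound on $\Leb^2(G)$. Bound $\lambda^p\DFgpl$ of each grid from below by a constant (depending on $k_0,p,\gamma$) times $\sum|v(j2^{-k_0})-v(i2^{-k_0})|^p$ over its good pairs. This follows from Corollary~\ref{cor:est_DF} and the fact that a grid has at most $4^{k_0+1}$ such pairs. Then transfer to $\iint_G|u(y)-u(x)|^p\ge M/4$.

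Finally, your scaling identity is off. The correct identity is $\Fgpl(u,(a,b))=(b-a)^{1-p}F_{\gamma,p,\lambda'}(u_s,(0,1))$, not $(b-a)^{1+\gamma}F_{\gamma,p,\lambda'}(u_s,(0,1))$, because the prefactor $\lambda^p=(b-a)^{-p-\gamma}\lambda'^p$ rescales too. The exponents in the statement are nevertheless the right ones.
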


\begin{proof}
Let us consider first the case $(a,b)=(0,1)$, and let $r_0>0$ be the constant given by Lemma~\ref{lemma:meas_theory} with $\Omega=(0,1)$. We observe that if
$$\lambda\geq \left(\frac{r_0}{2}\right)^{-1-\gamma/p} \left(\frac{1}{4}\iint_{(0,1)^2}|u(y)-u(x)|^p\,dxdy\right)^{1/p},$$
then (\ref{poincaré_1d_<-p}) holds trivially with $K=4\cdot (r_0/2)^{p+\gamma}$, so we can assume that
\begin{equation}\label{hp:lambda}
\lambda< \left(\frac{r_0}{2}\right)^{-1-\gamma/p}\left(\frac{1}{4}\iint_{(0,1)^2}|u(y)-u(x)|^p\,dxdy\right)^{1/p}.
\end{equation}

In this case, for every $t>0$, let us consider the functions
\begin{equation}\label{defn:k_delta}
k(t):=\lfloor\log_2 (1/t)\rfloor, \qquad \text{and} \qquad \delta(t):= 2^{k(t)}t,
\end{equation}
which are defined so that $\delta(t)$ is the unique number in $(1/2,1]$ that can be written as $2^k t$ for some $k\in\Z$.

For every $x\in \R$ and $\delta>0$, let us consider also the function $v_{x,\delta}(z):=u(x+\delta z)$, as in (\ref{defn:v_x,delta}). Then, for every $0<x<y<1$, it holds that
$$v_{x,\delta(y-x)}(0)=u(x) \qquad v_{x,\delta(y-x)}(2^{-k(y-x)})=u(y).$$

Moreover, if
$$|u(y)-u(x)|>\left(\frac{1}{4}\iint_{(0,1)^2}|u(y')-u(x')|^p\,dx'dy'\right)^{1/p}\qquad \text{and}\qquad y-x>r_0,$$
then from (\ref{hp:lambda}) it follows that (recall that $-1-\gamma/p>0$)
$$\lambda< \left(\frac{y-x}{2}\right)^{-1-\gamma/p}|u(y)-u(x)|\leq 2^{(k(y-x)+1)(1+\gamma/p)} \bigl|v_{x,\delta(y-x)}(2^{-k(y-x)}) - v_{x,\delta(y-x)}(0)\bigr|,$$
where the second inequality follows from the inequality $y-x\leq 2^{-k(y-x)}$.

This is exactly what we need to apply Proposition~\ref{prop:est_DF} in the interval $[0,2^{-k(y-x)}]$, hence for every $(x,y)\in (0,1)^2$ we deduce that
\begin{multline*}
|u(y)-u(x)|^p \mathbbm{1}_{\{y-x>r_0,\ x<y\}}(x,y)\phi_u(x,y)\\
\leq 2^{-\gamma}\lambda^p \delta(y-x)^{p+\gamma}2^{-k(y-x)(p-1)} \DFgpl\bigl(\delta(y-x),v_{x,\delta(y-x)},[0,2^{-k(y-x)}]\bigr),
\end{multline*}
where $\phi_u$ is defined as in (\ref{defn:phi_u}).

Integrating over $(0,1)^2$ and applying Lemma~\ref{lemma:meas_theory}, we get
\begin{align*}
\iint_{(0,1)^2}|u(y)-u(x)|^p \,dxdy& \leq 4\iint_{(0,1)^2}|u(y)-u(x)|^p \mathbbm{1}_{\{|y-x|>r_0\}}(x,y)\phi_u(x,y)\,dxdy\\
& = 8 \iint_{(0,1)^2}|u(y)-u(x)|^p \mathbbm{1}_{\{|y-x|>r_0,\ x<y\}}(x,y)\phi_u(x,y)\,dxdy\\
&\leq 2^{3-\gamma}\lambda^p \int_{0} ^{1-r_0}dx\int_{x+r_0} ^{1} 2^{-k(y-x)(p-1)} \delta(|y-x|)^{p+\gamma} \\
&\hspace{5em} \cdot\DFgpl\bigl(\delta(|y-x|),v_{x,\delta(|y-x|)},[0,2^{-k(|y-x|)}]\bigr)\,dy.
\end{align*}

Using a change of variables, some simple estimates and the identity $2^{-k(t)}\delta(t)=t$, we obtain that
\begin{align}
\lefteqn{\iint_{(0,1)^2}|u(y)-u(x)|^p \,dxdy}\nonumber\\
&\quad\leq 2^{3-\gamma}\lambda^p \int_{0} ^{1-r_0}dx \int_{r_0} ^{1-x} 2^{-k(t)(p-1)}\delta(t)^{p+\gamma} \DFgpl\bigl(\delta(t),v_{x,\delta(t)},[0,2^{-k(t)}]\bigr)\,dt\nonumber\\
&\quad\leq 2^{3-\gamma}\lambda^p \int_{0} ^{1-r_0}dx \int_{r_0} ^{1} t^{p-1} \delta(t)^{\gamma+1} \DFgpl\bigl(\delta(t),v_{x,\delta(t)},(-x/\delta(t),(1-x)/\delta(t))\bigr)\,dt.\label{est:Lp-DF}
\end{align}

Now we observe that $\delta(2^{-k}t)=\delta(t)$ for every $t\in (0,1)$ and every $k\in\N$ so it holds that
\begin{align*}
\lefteqn{ \int_{r_0} ^{1} t^{p-1}\delta(t)^{\gamma+1} \DFgpl\bigl(\delta(t),v_{x,\delta(t)},(-x/\delta(t),(1-x)/\delta(t))\bigr)\,dt}\\
&\qquad\qquad\leq \sum_{k=0} ^{k(r_0)} \int_{2^{-k-1}} ^{2^{-k}} t^{p-1}\delta(t)^{\gamma+1} \DFgpl\bigl(\delta(t),v_{x,\delta(t)},(-x/\delta(t),(1-x)/\delta(t))\bigl)\,dt\\
&\qquad\qquad= \sum_{k=0} ^{k(r_0)} \int_{1/2} ^{1} t^{p-1}\delta(t)^{\gamma+1} \DFgpl\bigl(\delta(t),v_{x,\delta(t)},(-x/\delta(t),(1-x)/\delta(t))\bigr)2^{-kp}\,dt
\end{align*}

Plugging this estimate into (\ref{est:Lp-DF}) and observing that
$$\sum_{k=0} ^{k(r_0)} 2^{-kp}\leq \sum_{k=0} ^{\infty} 2^{-kp}\leq 2,$$
and that $\delta(t)=t$ for every $t\in (1/2,1)$, we obtain that
\begin{align*}
\lefteqn{\iint_{(0,1)^2}|u(y)-u(x)|^p \,dxdy}\\
&\qquad\qquad\leq 2^{4-\gamma}\lambda^p \int_{0} ^{1-r_0}dx \int_{1/2} ^{1} t^{p+\gamma} \DFgpl\bigl(t,v_{x,t},(-x/t,(1-x)/t)\bigr)\,dt\\
&\qquad\qquad\leq 2^{4-\gamma}\lambda^p \int_{0} ^{1}dx \int_{1/2} ^{1} t^{\gamma-1} \DFgpl\bigl(t,v_{x,t},(-x/t,(1-x)/t)\bigr)\,dt.
\end{align*}

Since the function
$$x\mapsto \DFgpl\bigl(t,v_{x,t},(-x/t,(1-x)/t)\bigr)$$
is $t$-periodic, we deduce that
\begin{multline*}
\iint_{(0,1)^2}|u(y)-u(x)|^p \,dxdy\\
\leq 2^{4-\gamma}\cdot 2\lambda^p\int_{1/2} ^{1} t^{\gamma-1}\,dt \int_{0} ^t \DFgpl\bigl(t,v_{x,t},(-x/t,(1-x)/t)\bigr)\,dx,
\end{multline*}
so Proposition~\ref{prop:dyadic_repr} yields 
$$\iint_{(0,1)^2}|u(y)-u(x)|^p \,dxdy \leq 2^{4-\gamma}\Fgpl(u,(0,1)).$$

This means that (\ref{poincaré_1d_<-p}) holds with $K=2^{4-\gamma}$ under the assumption (\ref{hp:lambda}), so we conclude that (\ref{poincaré_1d_<-p}) holds for every $\lambda>0$ with
\begin{equation}\label{defn:K_1d}
K=\max\{4\cdot (r_0/2)^{p+\gamma},2^{4-\gamma}\}.
\end{equation}

Finally, we observe that the case of a general interval $(a,b)\subset \R$ follows from the case $(a,b)=(0,1)$ just by scaling. 

Indeed, given a function $u\in L^p((a,b))$, if we set
$$w(x)=u(a+(b-a)x), \qquad \lambda'= (b-a)^{1+\gamma/p}\cdot \lambda,$$
then it holds that
$$\iint_{(a,b)^2}|u(y)-u(x)|^p\,dxdy=(b-a)^2 \iint_{(0,1)^2} |w(y')-w(x')|^p\,dx'dy',$$
and
$$\Fgpl(u,(a,b))= (b-a)^{1-p} F_{\gamma,p,\lambda'}(w,(0,1)),$$
so (\ref{poincaré_1d_<-p}) follows from the corresponding inequality applied to $w$ in $(0,1)$, with the same constant $K$ defined in (\ref{defn:K_1d}), where $r_0$ still corresponds to the domain $(0,1)$.
\end{proof}

At this point, we are ready to prove Theorem~\ref{thm:poincaré_<-p} in full generality. The proof relies on the one-dimensional result and the usual slicing method, even though the application here is not as straightforward as in the proof of the Gamma-liminf estimate, and requires again (a part of) Lemma~\ref{lemma:meas_theory}.

\begin{proof}[Proof of Theorem~\ref{thm:poincaré_<-p}]
Since the case $N=1$ follows from Theorem~\ref{thm:poincaré_<-p_1d}, let us assume that $N\geq 2$.

From Lemma~\ref{lemma:meas_theory}, and actually from (\ref{est:lemma_meas_1}), we deduce that
$$\iint_{\Omega^2}|u(y)-u(x)|^p\,dxdy\leq 2\iint_{\Omega^2}|u(y)-u(x)|^p\mathbbm{1}_{\{|y-x|>r_0\}}(x,y)\,dxdy.$$

Then, using the notation introduced at the end of Section~\ref{sec:gamma-liminf}, we set $y=x+\sigma t$, then $x=z+\sigma s$ and we apply Fubini theorem (equivalently we can directly use the slicing formula from \cite[page~622]{2020-APDE-AGMP}), so we can write
\begin{multline*}
\lefteqn{\iint_{\Omega^2}|u(y)-u(x)|^p\mathbbm{1}_{\{|y-x|>r_0\}}(x,y)\,dxdy}\\
=\int_{\s^{N-1}} d\mathcal{H}^{N-1}(\sigma) 
\int_{\Omega_\sigma}dz \iint_{\Omega_{\sigma,z}^2} |u_{\sigma,z}(t)-u_{\sigma,z}(s)|^p \mathbbm{1}_{\{|t-s|>r_0\}}(s,t)|t-s|^{N-1}\,dtds.
\end{multline*}

We point out that the convexity of $\Omega$ ensures that the sets $\Omega_{\sigma,z}$ are intervals, and in particular the last integral vanishes if $\Leb^1(\Omega_{\sigma,z})\leq r_0$, so we deduce that
\begin{multline*}
\int_{\s^{N-1}} d\mathcal{H}^{N-1}(\sigma) 
\int_{\Omega_\sigma}dz \iint_{\Omega_{\sigma,z}^2} |u_{\sigma,z}(t)-u_{\sigma,z}(s)|^p \mathbbm{1}_{\{|t-s|>r_0\}}(s,t)|t-s|^{N-1}\,dtds\\
\leq \diam(\Omega)^{N-1} \int_{\s^{N-1}} d\mathcal{H}^{N-1}(\sigma) 
\int_{\Omega_\sigma} dz\, \chi_{r_0}(\Omega_{\sigma,z}) \iint_{\Omega_{\sigma,z}^2} |u_{\sigma,z}(t)-u_{\sigma,z}(s)|^p \,dtds,
\end{multline*}
where
$$\chi_{r_0}(\Omega_{\sigma,z}):=\begin{cases}
1&\text{if }\Leb^1(\Omega_{\sigma,z})>r_0,\\
0&\text{if }\Leb^1(\Omega_{\sigma,z})\leq r_0.
\end{cases}$$

Now we apply Theorem~\ref{thm:poincaré_<-p_1d}, so we obtain that
$$\iint_{\Omega_{\sigma,z}^2} |u_{\sigma,z}(t)-u_{\sigma,z}(s)|^p \,dtds \leq K\cdot\left(\Leb^1(\Omega_{\sigma,z})^{p+1} \Fgpl(u_{\sigma,z},\Omega_{\sigma,z}) + \lambda^p \Leb^1(\Omega_{\sigma,z})^{2+p+\gamma}\right)$$

We observe that $r_0\leq \Leb^1(\Omega_{\sigma,z})\leq \diam(\Omega)$, for every $\sigma \in \s^{N-1}$ and every $z\in \Omega_\sigma$ for which $\chi_{r_0}(\Omega_{\sigma,z})\neq 0$, so we can absorb the powers of $\Leb^1(\Omega_{\sigma,z})$ into the constant $K$, independently of the sign of the exponents.

Thus, if we allow the constant $K$ to change value from line to line (but still depending only on $\Omega,N,p,\gamma$), combining the previous estimates, we find that
$$\iint_{\Omega^2}|u(y)-u(x)|^p\,dxdy\leq K \int_{\s^{N-1}} d\mathcal{H}^{N-1}(\sigma) 
\int_{\Omega_\sigma} \left(\Fgpl(u_{\sigma,z},\Omega_{\sigma,z})+\lambda^p\right) dz,$$
and finally, recalling (\ref{sectioning_functionals}), we obtain (\ref{th:poincaré_<-p})
\end{proof}

\begin{rmk}[Estimates for $K$]
\begin{em}
We point out that an explicit value for the constant $K$ in Theorem~\ref{thm:poincaré_<-p_1d} can be easily computed, since the number $r_0$ in (\ref{defn:K_1d}) comes from (\ref{defn:r_0}) with $N=1$ and $\Omega=(0,1)$. It would be also possible to optimize the choice of $r_0$ in Lemma~\ref{lemma:meas_theory} to find a better estimate for $K$.

In the higher dimensional case, following our proof one could find an explicit expression for $K$ in terms of $\Leb^N(\Omega)$ and $\diam (\Omega)$.  However, if $\Omega$ has a simple geometry, for example if $\Omega$ is a ball or a cube, then the length of $\Omega_{\sigma,z}$ can be computed explicitly, so one could find a more precise value.
\end{em}
\end{rmk}

\subsection{The case $\gamma>0$: counterexample and proof of Theorem~\ref{thm:poincaré_>0}}

In this subsection we investigate the validity of the Poincaré inequality when $\gamma>0$. We start with the following simple counterexample, which provides a negative answer to \cite[Open question~33]{Nguyen-survey} in the case $\gamma>0$.

\begin{prop}\label{prop:counterex_>0}
There exists a sequence of functions $u_j:(-1,1)\to \R$ such that
$$\lim_{j\to +\infty}\iint_{(-1,1)^2}|u_j(y)-u_j(x)|^p=+\infty \qquad \text{and}\qquad \limsup_{j\to +\infty}\Fgpl(u_j,(-1,1))<+\infty,$$
for every $\gamma>0$, $p\geq 1$ and $\lambda>0$.
\end{prop}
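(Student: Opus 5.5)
The plan is to use a pure jump whose height blows up. Since the functional only counts pairs $(x,y)$, weighting each with $|y-x|^{\gamma-1}$ (here $N=1$), its value is bounded by a fixed measure of the set of pairs that straddle the jump. That bound does not depend on the height. Meanwhile the $L^p$ oscillation grows like the height to the power $p$.

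Concretely, take $u_j:=j\,\mathbbm{1}_{(0,1)}$ on $(-1,1)$; this choice does not depend on $\gamma,p,\lambda$.

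\emph{Left side.} If $x,y$ lie on opposite sides of $0$, then $|u_j(y)-u_j(x)|=j$; otherwise the difference is $0$. Hence
$$\iint_{(-1,1)^2}|u_j(y)-u_j(x)|^p\,dxdy=2j^p\to+\infty.$$

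\emph{Right side.} Since $u_j$ is constant on each side of $0$, the set $E_{\gamma,p,\lambda}(u_j,(-1,1))$ is contained in $S:=\{(x,y)\in(-1,1)^2: xy<0\}$. Therefore
$$F_{\gamma,p,\lambda}(u_j,(-1,1))\le \lambda^p\nu_\gamma(S)= 2\lambda^p\int_{-1}^0\!\int_0^1 (y-x)^{\gamma-1}\,dy\,dx.$$
(In fact equality holds once $j>\lambda 2^{1+\gamma/p}$, but only the inequality is needed.) This bound is independent of $j$, so it remains to check that the integral is finite.

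\emph{Finiteness of the bound.} The only possible obstruction is the singularity at $(0,0)$ when $\gamma<1$; this is the step I would check most carefully. Setting $s=-x$ and integrating in $y$ gives
$$\frac{1}{\gamma}\int_0^1\bigl((1+s)^{\gamma}-s^{\gamma}\bigr)\,ds,$$
which is finite for every $\gamma>0$. Alternatively, polar coordinates around the origin give an integrand of order $r^{\gamma-1}\cdot r\,dr$, which is integrable for all $\gamma>-1$.

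Combining the two parts gives both limits in the statement, for every $\gamma>0$, $p\ge1$ and $\lambda>0$. This is precisely the phenomenon the remark in the introduction anticipates. When $\gamma>0$, large jumps are seen only through pairs close to the discontinuity, and those pairs carry finite $\nu_\gamma$-mass. That is why Theorem~\ref{thm:poincaré_>0} has to pay with the oscillation term $\lambda^{-p}\osc(u,\Omega)^{2p}$.
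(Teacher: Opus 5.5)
Your proposal is correct and is essentially the paper's own proof. It uses the same jump functions $u_j=j\mathbbm{1}_{(0,1)}$, the same identity $\iint|u_j(y)-u_j(x)|^p\,dxdy=2j^p$, and the same $j$-independent bound $F_{\gamma,p,\lambda}(u_j,(-1,1))\le 2\lambda^p\nu_\gamma\bigl((-1,0)\times(0,1)\bigr)$; your integral $\frac{1}{\gamma}\int_0^1\bigl((1+s)^\gamma-s^\gamma\bigr)\,ds=\frac{2(2^\gamma-1)}{\gamma(\gamma+1)}$, multiplied by $2\lambda^p$, gives exactly the paper's constant $\frac{4\lambda^p(2^\gamma-1)}{\gamma(\gamma+1)}$.
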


\begin{proof}
For every $j\in\N$ we set
$$u_j(x):=\begin{cases}
    0 &\text{if }x\leq 0,\\
    j &\text{if }x>0.
\end{cases}$$

Then it holds that
$$\iint_{(-1,1)^2}|u_j(y)-u_j(x)|^p = 2j^p,$$
and
\begin{align*}
\Fgpl(u_j,(-1,1))&\leq 2\lambda^p \nu_\gamma\bigl((-1,0)\times (0,1)\bigr)\\
&=2\lambda^p\int_{-1} ^0 dx \int_0 ^1 (y-x)^{\gamma-1} \,dy\\
&=\frac{4\lambda^p (2^{\gamma}-1)}{\gamma(\gamma+1)},
\end{align*}
so the thesis follows.
\end{proof}

We can now prove Theorem~\ref{thm:poincaré_>0} in the one-dimensional case, with explicit dependence on the domain.

\begin{thm}\label{thm:poincaré_1d_>0}
Let $(a,b)\subset \R$ be an interval, and let $p\geq 1$ and $\gamma>0$ be real numbers. Then there exists a constant $K>0$, depending only on $\gamma$ and $p$, such that
\begin{multline}\label{poincaré_1d_>0}
\iint_{(a,b)^2}|u(y)-u(x)|^p\,dxdy\\
\leq K\cdot\left((b-a)^{p+1} \Fgpl(u,(a,b)) + \lambda^{-p} (b-a)^{2-p-\gamma}\osc(u,(a,b))^{2p} \right),
\end{multline}
for every $u\in L^p(\Omega)$ and every $\lambda>0$.
\end{thm}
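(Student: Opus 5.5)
The plan is to follow the proof of Theorem~\ref{thm:poincaré_<-p_1d} step by step, reversing the role of visibility. First I reduce to $(a,b)=(0,1)$ by the same scaling $w(x)=u(a+(b-a)x)$, $\lambda'=(b-a)^{1+\gamma/p}\lambda$. The oscillation is scale invariant and $\lambda'^{-p}=(b-a)^{-p-\gamma}\lambda^{-p}$. The left-hand side picks up a factor $(b-a)^2$ and $\Fgpl$ picks up $(b-a)^{1-p}$. So the inequality on $(0,1)$ with remainder $\lambda^{-p}\osc(w)^{2p}$ produces exactly the powers $(b-a)^{p+1}$ and $(b-a)^{2-p-\gamma}$ in (\ref{poincaré_1d_>0}).

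On $(0,1)$, let $r_0$ be given by Lemma~\ref{lemma:meas_theory} and set $\omega:=\osc(u,(0,1))$, which we may assume finite and positive. I split into two cases according to the size of $\lambda$.

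\emph{Case $\lambda(r_0/2)^{1+\gamma/p}<\omega$.} Here I bound the left-hand side trivially:
\[
\iint|u(y)-u(x)|^p\le\omega^p=\omega^{2p}\omega^{-p}\le(r_0/2)^{-(p+\gamma)}\lambda^{-p}\omega^{2p}.
\]
This case is absorbed by the second term of the right-hand side.

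\emph{Case $\lambda(r_0/2)^{1+\gamma/p}\ge\omega$.} Take any pair $x<y$ with $y-x>r_0$. Since $1+\gamma/p>0$ and $2^{-k(y-x)}\ge y-x$, we get
\[
|u(y)-u(x)|\le\omega\le\lambda\bigl(\delta(y-x)\,2^{-k(y-x)}\bigr)^{1+\gamma/p}
\]
up to the harmless factor coming from $\delta\ge1/2$, which is absorbed by using $r_0/2$. So the $k(y-x)$-dyadic interval $[0,2^{-k(y-x)}]$ for $v_{x,\delta(y-x)}$ is \emph{not} visible. This is the $\gamma>0$ analogue of hypothesis (\ref{[alpha,beta]-visible}). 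I then need a $\gamma>0$ version of Proposition~\ref{prop:est_DF} (the counterpart of \cite[Proposition~2.2]{GP-gammaliminf}), of the form
\[
\DFgpl(\delta,v,[\alpha,\beta])\ge c_\gamma\,\frac{(B-A)^p}{\lambda^p\delta^{p+\gamma}(\beta-\alpha)^{p-1}}\quad\text{whenever }[\alpha,\beta]\text{ is not visible.}
\]
With this in hand, the rest is a verbatim copy of the computation leading to (\ref{est:Lp-DF}): multiply by $\phi_u$, integrate, apply Lemma~\ref{lemma:meas_theory}, use the change of variables $t=y-x$, the dyadic rescaling $\delta(2^{-k}t)=\delta(t)$ together with $\sum 2^{-kp}\le2$, the periodicity in $x$, and Proposition~\ref{prop:dyadic_repr}. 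Only the powers of $\delta\in[1/2,1]$ change, and they are bounded by constants depending on $\gamma,p$. Note that $\phi_u$ is not even needed here, since the first factor of (\ref{est:lemma_meas_1}) suffices.

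The main obstacle is the dyadic lower bound for $\gamma>0$. My first attempt is to mirror the proof of Proposition~\ref{prop:est_DF}, swapping visible and invisible:
\begin{itemize}
\item \emph{important} intervals are visible intervals whose parent is invisible;
\item the complement of their union is covered, at each level $K$, by invisible $K$-dyadic intervals.
\end{itemize}
The invisible intervals at level $K$ contribute at most $2^{K}\lambda(\delta2^{-K})^{1+\gamma/p}\to0$ to the telescoping sum for $B-A$, because $1+\gamma/p>1$. Each important interval $I$ has an invisible parent $P$, and I would control the increment over $I$ by that of $P$ plus the increment over its sibling. Then Hölder, exactly as in the derivation of (\ref{est_J}), gives
\[
(B-A)^p\lesssim\lambda^p\delta^{p+\gamma}(\beta-\alpha)^{p-1}\sum_k|\Imp_k|\,2^{-k(1+\gamma)},
\]
and $|\Vis_k|\ge|\Imp_k|$ finishes the argument. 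If the sibling bookkeeping fails, the fallback is to import \cite[Proposition~2.2]{GP-gammaliminf} directly, since its hypothesis is precisely invisibility of $[\alpha,\beta]$.

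Finally, I take $K$ to be the maximum of the constants from the two cases.
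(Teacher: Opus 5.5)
Your proposal is correct and matches the paper's proof. It uses the same scaling to $(0,1)$ and the same split according to whether $\lambda$ exceeds $(2/r_0)^{1+\gamma/p}\osc(u,(0,1))$. In the large-$\lambda$ case, your bound $\lambda\, 2^{-(k(y-x)+1)(1+\gamma/p)}>\lambda (r_0/2)^{1+\gamma/p}\geq \osc(u,(0,1))$ is exactly the hypothesis under which the paper imports Proposition~2.2 of \cite{GP-gammaliminf}, which is your fallback, before concluding as in the one-dimensional case $\gamma<-p$; you are also right that only (\ref{est:lemma_meas_1}) is needed there, not $\phi_u$.

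If you prefer your self-contained mirrored argument, bounding each important interval by its parent plus its sibling breaks down when both children of an invisible parent are visible. You can repair it by grouping such a pair in the signed telescoping sum: their increments add up to the parent's increment, which is at most $\lambda(\delta 2^{-(k-1)})^{1+\gamma/p}$.
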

\begin{proof}
Arguing by scaling as in the proof of Theorem~\ref{thm:poincaré_<-p_1d} we can reduce ourselves to the case $(a,b)=(0,1)$.

In this case, we observe that if
$$\lambda\leq \left(\frac{2}{r_0}\right)^{1+\gamma/p}\osc(u,(0,1)),$$
then (\ref{poincaré_1d_>0}) holds trivially with $K=(2/r_0)^{p+\gamma}$, since of course
$$\iint_{(0,1)^2}|u(y)-u(x)|^p\,dxdy\leq \osc(u,(0,1)).$$

Hence we can assume that
$$\lambda> \left(\frac{2}{r_0}\right)^{1+\gamma/p}\osc(u,(0,1)).$$

Then, using the notation introduced in (\ref{defn:v_x,delta}) and (\ref{defn:k_delta}), for every $0<x<y<1$ with $y-x>r_0$ we deduce that
\begin{equation}\label{hp:est_GP}
\lambda>\left(\frac{2}{y-x}\right)^{1+\gamma/p}\osc(u,(0,1))\geq 2^{(k(y-x)+1)(1+\gamma/p)}\bigl|v_{x,\delta(y-x)}(2^{-k(y-x)}) - v_{x,\delta(y-x)}(0)\bigr|,
\end{equation}

Thus, we can apply \cite[Proposition~2.2]{GP-gammaliminf} which implies that for every $(x,y)\in (0,1)^2$ it holds that
\begin{multline*}
|u(y)-u(x)|^p \mathbbm{1}_{\{y-x>r_0,\ x<y\}}(x,y)\\
\leq (2^{\gamma+1}-1) \lambda^p \delta(y-x)^{p+\gamma}2^{-k(y-x)(p-1)} \DFgpl\bigl(\delta(y-x),v_{x,\delta(y-x)},[0,2^{-k(y-x)}]\bigr).
\end{multline*}

We remark that \cite[Proposition~2.2]{GP-gammaliminf} was written for domains $[\alpha,\beta]$ with integer endpoints, under the assumption that $\lambda>2^{1+\gamma/p}|v(\beta)-v(\alpha)|$. However, by scaling (see \cite[formula~(3.5)]{GP-gammaliminf}), the same results applies when the endpoints of the domain belong to $2^{-k}\Z$, under the assumption that $\lambda>2^{(k+1)(1+\gamma/p)}|v(\beta)-v(\alpha)|$, and in our case this is exactly (\ref{hp:est_GP}).

From this point we can conclude exactly as in the proof of Theorem~\ref{thm:poincaré_<-p_1d}.
\end{proof}

\begin{proof}[Proof of Theorem~\ref{thm:poincaré_>0}]
Given Theorem~\ref{thm:poincaré_1d_>0}, then Theorem~\ref{thm:poincaré_>0} follows from the same slicing argument used to pass from Theorem~\ref{thm:poincaré_<-p_1d} to Theorem~\ref{thm:poincaré_<-p}.
\end{proof}

\section{Open problems}\label{sec:op}
In this section we describe some problems that remains open.

The first one concerns the extension of the Gamma-liminf estimates to the range $\gamma\in (-p,-1]$ or, equivalently, improving (\ref{liminf_est_nguyen}) by replacing $u$ with a sequence $u_\lambda\to u$.
\begin{open}
Let $p\geq 1$ and $\gamma\in (-p,-1]$. Do there exist a positive constant $c(N,\gamma,p)>0$ such that
$$\Gamma-\liminf_{\lambda\to 0^+} \Fgpl(u,\R^N)\geq c(N,\gamma,p)\cdot F_p(u,\R^N)\qquad \forall u \in L^1_{\loc} (\R^N)?$$
\end{open}

We recall that such estimate is known to fail for $\gamma\in (-1,0)$ (see \cite[Proposition~17 and Theorem~29]{Nguyen-survey}), while it is proved in \cite{GP-gammaliminf} for $\gamma>0$, in \cite{2008-JEMS-Nguyen} (with roots in \cite{2006-CRAS-BN}) for $\gamma=-p$ (see also \cite{2020-APDE-AGMP} for the sharp version), and in Theorem~\ref{thm:gamma-liminf} for $\gamma<-p$. 

A related problem is the validity of the Poincaré inequality when $\gamma\in(-p,-1]$, which was posed \cite[Open question~33]{Nguyen-survey} and answered in the present paper for $\gamma\in \R\setminus [-p,0]$. We recall that the case $\gamma=-p$ was established in \cite{2011-CalcVar-Nguyen}.

\begin{open}
Let $p\geq 1$ and $\gamma\in (-p,-1]$ and let $\Omega\subset \R^N$ be a convex set (or even a ball). Do there exists a constant $K(\Omega,N,\gamma,p)>0$ such that
$$\iint_{\Omega \times \Omega} |u(y)-u(x)|^p\,dxdy \leq K(\Omega,N,\gamma,p)\cdot \left(\Fgpl(u,\Omega) + \lambda ^p \right),$$
for every $u\in L^p(\Omega)$ and every $\lambda>0$?
\end{open}

Finally, we believe that the most relevant problem toward the understanding of the asymptotic behavior of the functionals (\ref{defn:Fgpl}) is whether the gap between the pointwise limit and the Gamma-limit is a peculiarity of the case $\gamma=-p$, or also occurs for other values of $\gamma$. Thus, we pose the following questions.

\begin{open}
Let $p>1$ and $\gamma>0$. Is it true that
\begin{equation}\label{open_p>1}
\Gamma-\lim_{\lambda\to +\infty} \Fgpl(u,\R^N)=C_{N,p}\cdot \frac{1}{\gamma} \cdot F_p(u,\R^N) \qquad \forall u\in\ L^1_{\loc}(\R^N)?
\end{equation}
\end{open}

\begin{open}\label{op_p=1_gamma>0}
Let $p=1$ and $\gamma>0$. Is it true that
\begin{equation}\label{open_p=1}
\Gamma-\lim_{\lambda\to +\infty} F_{\gamma,1,\lambda}(u,\R^N)=C_{N,1}\cdot \frac{1}{\gamma+1} \cdot F_p(u,\R^N) \qquad \forall u\in\ L^1_{\loc}(\R^N)?
\end{equation}
\end{open}

\begin{open}
Let $p>1$ and $\gamma \in (-\infty,-p)\cup (-p,-1]$. Is it true that
$$\Gamma-\lim_{\lambda\to 0^+} \Fgpl(u,\R^N)=C_{N,p}\cdot \frac{1}{|\gamma|} \cdot F_p(u,\R^N) \qquad \forall u\in\ L^1_{\loc}(\R^N)?$$
\end{open}

In the case $p=1$ and $\gamma \in (-\infty,-p)\cup (-p,-1]$, we do not make a conjecture, since the pointwise limit has not been appropriately investigated and the constant coming from the smooth case is unlikely to be optimal, but the corresponding problem certainly deserves the same attention of the previous ones.

We also remark that, even if (\ref{open_p=1}) turns out to be true, it might also be reasonable that in (\ref{open_p>1}) the factor $1/\gamma$ should be replaced by some constant $c(\gamma,p)>0$ converging to $1/(\gamma+1)$ when $p\to 1^+$, namely there might be a gap only for $p>1$.

Anyway, if any of the problem above had a negative answer, then it would be interesting to identify the correct Gamma-limit. Actually, more than the explicit value of the constants, the relevant question would be to understand the shape of the recovery sequences for smooth functions.

\appendix

\section{Monotonicity in the dyadic setting}

In this appendix we show that there exists a procedure to transform a given function, defined on dyadic numbers, into a monotone function without increasing the value of the dyadic functional. This is not necessary for our analysis, and does not simplify any of the previous proofs, but we include it here in the hope that it might be useful for some future research.

Indeed, a similar statement (although based on a completely different procedure) was crucial in the proof of the sharp liminf inequality for the case $\gamma=-p$ in \cite{2020-APDE-AGMP}, which is the only case in which the exact Gamma-limit is known.

Furthermore, if one could prove a similar result in the continuous setting, namely for the functionals (\ref{defn:Fgpl}) and for functions defined on an interval, then this would immediately imply a positive answer to Open problem~\ref{op_p=1_gamma>0}, as observed in \cite[Remark~3.28]{2023-Lahti}.

Unfortunately, if we start from some $v:(a,b)\to \R$ and we consider different dyadic discretizations of the form $(\ref{defn:v_x,delta})$, corresponding to different choices of $(x,\delta)$, then the monotone functions given by our procedure might not be compatible, namely they might not be discretizations of a single monotone function $\widehat{v}:(a,b)\to \R$. For this reason, we cannot deduce the continuous result from the dyadic one and Proposition~\ref{prop:dyadic_repr}.

\begin{lemma}\label{lemma:monotonicity}
Let $k_0\in\N$ and let $[\alpha,\beta]$ be a $k_0$-dyadic interval and $v:\D\cap [\alpha,\beta]\to \R$ be a function.

Then there exists a monotone function $\widehat{v}:\D\cap [\alpha,\beta]\to \R$ such that
$$\widehat{v}(\alpha) =v(\alpha), \qquad \widehat{v}(\beta) =v(\beta),$$
and
\begin{equation}\label{th:v_hat-v}
\DFgpl(\delta,\widehat{v},[\alpha,\beta]) \leq \DFgpl(\delta,v,[\alpha,\beta]),
\end{equation}
for every $p\geq 1$, $\gamma<-p$, $\lambda>0$ and $\delta \in [1/2,1]$.
\end{lemma}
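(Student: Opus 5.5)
The plan is to build $\widehat{v}$ so that, on \emph{every} dyadic interval $[l,r]\subseteq[\alpha,\beta]$, its increment is no larger in absolute value than that of $v$:
$$|\widehat{v}(r)-\widehat{v}(l)|\leq |v(r)-v(l)|.$$
Since $\psigpl(\delta,\Delta)$ is nondecreasing in $\Delta$ for every fixed $\delta$, such a $\widehat{v}$ makes each term of (\ref{defn:DF}) no larger than the corresponding term for $v$. This gives (\ref{th:v_hat-v}) simultaneously for all $p\geq 1$, $\gamma<-p$, $\lambda>0$ and $\delta\in[1/2,1]$. The construction uses no parameters, which is essential because the lemma asks for a single $\widehat{v}$ that works for all of them at once.

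By symmetry (replace $v$ by $-v$) we may assume $v(\alpha)\leq v(\beta)$, and we aim for a nondecreasing $\widehat{v}$. We define $\widehat{v}$ level by level on the points of $2^{-k}\Z\cap[\alpha,\beta]$, for $k\geq k_0$, by recursion on $k$. At level $k_0$ the only points are $\alpha$ and $\beta$, and we set $\widehat{v}(\alpha)=v(\alpha)$, $\widehat{v}(\beta)=v(\beta)$. Along the recursion we maintain the following invariant: for every $k$-dyadic interval $[l,r]\subseteq[\alpha,\beta]$,
$$0\leq \widehat{v}(r)-\widehat{v}(l)\leq |v(r)-v(l)|.$$
At level $k_0$ this holds trivially.

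For the inductive step, take a $k$-dyadic interval $[l,r]$ with midpoint $m$, and set
$$D:=\widehat{v}(r)-\widehat{v}(l),\qquad a:=|v(m)-v(l)|,\qquad b:=|v(r)-v(m)|.$$
We define
$$\widehat{v}(m):=\widehat{v}(l)+\min\{a,D\}.$$
Then $\widehat{v}(m)\in[\widehat{v}(l),\widehat{v}(r)]$, and the left increment $\min\{a,D\}$ is at most $a$. For the right increment:
\begin{itemize}
\item if $a\geq D$, the right increment is $0\leq b$;
\item if $a<D$, the right increment is $D-a$, and $D-a\leq b$ because the invariant and the triangle inequality give $D\leq |v(r)-v(l)|\leq a+b$.
\end{itemize}
Hence both $(k+1)$-dyadic halves of $[l,r]$ satisfy the invariant. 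Each point of $2^{-(k+1)}\Z\setminus 2^{-k}\Z$ inside $[\alpha,\beta]$ is the midpoint of exactly one $k$-dyadic interval, so the definition is consistent and never changes values fixed at coarser levels.

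Monotonicity of $\widehat{v}$ on all of $\D\cap[\alpha,\beta]$ follows from the invariant. For two dyadic points $q<q'$, pick $k$ large enough that both lie in $2^{-k}\Z$. Then $\widehat{v}(q')-\widehat{v}(q)$ is a sum of nonnegative increments over consecutive $k$-dyadic intervals, so it is nonnegative. The endpoint values are preserved by construction.

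Finally, the invariant holds for every dyadic interval contained in $[\alpha,\beta]$, namely every $k$-dyadic interval with $k\geq k_0$, and these are exactly the intervals indexed in (\ref{defn:DF}) for $I=[\alpha,\beta]$. Applying the monotonicity of $\psigpl$ term by term gives (\ref{th:v_hat-v}). I do not expect a serious obstacle. The only delicate point is choosing the midpoint value so that both half-increments are dominated at once, and the identity $D\leq a+b$ resolves it. The key observation is that controlling increments is enough, which is also why the sign of the exponent $1+\gamma/p$ plays no role here.
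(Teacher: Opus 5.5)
Your proof is correct, and it takes a different route from the paper's. The paper obtains $\widehat v$ as the limit of iterated truncations. First $v_0$ is $v$ clamped to $[A,B]$, and then $v_{j+1}$ is obtained by clamping $v_j$ on each $(k_0+j+1)$-dyadic interval between its endpoint values. Truncation does not increase the functional, and $\DFgpl$ splits over the essentially disjoint $(k_0+j+1)$-dyadic intervals, so each step gives $\DFgpl(\delta,v_{j+1},[\alpha,\beta])\leq\DFgpl(\delta,v_j,[\alpha,\beta])$. The paper passes to $\widehat v=\lim_j v_j$ using (\ref{small-interval-invisible}): for $\gamma<-p$, $\delta\leq 1$ and values in $[A,B]$, only finitely many levels contribute, so $\DFgpl(\delta,\widehat v,[\alpha,\beta])=\DFgpl(\delta,v_j,[\alpha,\beta])$ for $j$ large. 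Unwinding the nested truncations, the paper's rule amounts to taking for $\widehat v(m)$ the clamp of $v(m)$ into $[\widehat v(l),\widehat v(r)]$. Yours sets $\widehat v(m)=\widehat v(l)+\min\{|v(m)-v(l)|,D\}$ instead, so the two functions differ in general. What your version buys is an invariant on every single dyadic increment, checked directly through $D\leq a+b$. The inequality then follows term by term, with no limiting argument and no use of the sign of $1+\gamma/p$ or of $\delta\leq 1$, so you in fact prove the lemma for every $\gamma\in\R$ and every $\delta>0$. The paper's route has the advantage of reusing the truncation principle already used in Proposition~\ref{prop:est_DF}. However, it only tracks the total value of the functional along the iteration, so it needs the $\gamma<-p$ finiteness to conclude. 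Your viewpoint shows this restriction is an artifact of the proof: each truncation is $1$-Lipschitz, so the iterated truncations also decrease every increment individually.
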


\begin{proof}
Let us set $A:=v(\alpha)$ and $B:=v(\beta)$. We can assume, without loss of generality, that $A\leq B$.

By truncation, namely by replacing $v(x)$ with $v_0(x):=\max\{\min\{v(x),B\},A\}$, we reduce to the case in which $v_0:[\alpha,\beta]\to [A,B]$, because the truncation does not increase the functional. We point out that it still holds that $v_0(\alpha)=A$ and $v_0(\beta)=B$.

Then we repeat the truncation argument on each of the two halves of the domain, namely we set
$$v_1(x):=\begin{cases}
\max\{\min\{v_0(x),v(\frac{\alpha+\beta}{2})\},A\} &\text{if }x\in \D\cap[\alpha,\frac{\alpha+\beta}{2}],\\
\max\{\min\{v_0(x),B\},v(\frac{\alpha+\beta}{2})\} &\text{if }x\in \D\cap[\frac{\alpha+\beta}{2},\beta].
\end{cases}$$

We observe that $v_1(\tfrac{\alpha+\beta}{2})=v_0(\tfrac{\alpha+\beta}{2})$ and that $v_1$ has the property that $v_1(x)\leq v_1(y)$ for every $x\in \D\cap[\alpha,\frac{\alpha+\beta}{2}]$ and every $y\in \D\cap[\frac{\alpha+\beta}{2},\beta]$. Moreover, it holds that
$$\DFgpl(\delta,v_1,[\alpha,\tfrac{\alpha+\beta}{2}])\leq \DFgpl(\delta,v_0,[\alpha,\tfrac{\alpha+\beta}{2}])\qquad \forall \delta \in [1/2,1],$$
and
$$\DFgpl(\delta,v_1,[\tfrac{\alpha+\beta}{2},\beta])\leq \DFgpl(\delta,v_0,[\tfrac{\alpha+\beta}{2},\beta])\qquad \forall \delta \in [1/2,1],$$
because $v_1$ is a truncation of $v_0$ in each of the two subintervals. As a consequence, since the dyadic functional does not involve any interaction between points $(x,y)\in [\alpha,\frac{\alpha+\beta}{2}] \times [\frac{\alpha+\beta}{2},\beta]$, we have that
\begin{multline*}
\DFgpl(\delta,v_1,[\alpha,\beta])=\DFgpl(\delta,v_1,[\alpha,\tfrac{\alpha+\beta}{2}])+\DFgpl(\delta,v_1,[\tfrac{\alpha+\beta}{2},\beta])\\
\leq \DFgpl(\delta,v_0,[\alpha,\tfrac{\alpha+\beta}{2}])+\DFgpl(\delta,v_0,[\tfrac{\alpha+\beta}{2},\beta])=\DFgpl(\delta,v_0,[\alpha,\beta]).
\end{multline*}

Now let us assume that for some positive integer $j>0$ we have already defined a function $v_j:[\alpha,\beta]\to [A,B]$ with the property that $v_j(x)\leq v_j(y)$ whenever $x<y$ belong to different $(k_0+j)$-dyadic intervals contained in $[\alpha,\beta]$.

Then we inductively define $v_{j+1}$ by setting 
$$v_{j+1}(x):=\max\{\min\{v_j(x),v(\beta'))\},v(\alpha')\} \qquad \forall x\in \D\cap[\alpha',\beta'],$$
for every $(k_0+j+1)$-dyadic interval $[\alpha',\beta']\subset [\alpha,\beta]$.

We observe that $v_{j+1}(x)=v_j(x)$ for every $x=i/2^{k_0+j+1}$ with $i\in \D_{k_0+j+1}([\alpha,\beta])$, and that $v_{j+1}$ has the property that $v_j(x)\leq v_j(y)$ whenever $x<y$ belong to different $(k_0+j+1)$-dyadic intervals contained in $[\alpha,\beta]$. Moreover, since $v_{j+1}$ is obtained by truncation in every $(k_0+j+1)$-dyadic interval, and the dyadic functional does not involve any interaction between points belonging to different $(k_0+j+1)$-dyadic intervals, by the same arguments used above with $v_1$ we get that
\begin{equation}\label{v_j-decrease}
\DFgpl(\delta,v_{j+1},[\alpha,\beta]) \leq \DFgpl(\delta,v_j,[\alpha,\beta]).
\end{equation}

Finally, we set
$$\widehat{v}(x):=\lim_{j\to +\infty} v_j(x)\qquad \forall x\in \D\cap [\alpha,\beta].$$
We observe that the limit in the definition above always exists, because if $x=i/2^k$ with $i\in \D_{k}([\alpha,\beta])$ then the sequence $\{v_j(x)\}$ becomes constant as soon as $k_0+j>k$.

Moreover, the function $\widehat{v}$ is nondecreasing. Indeed, if $x<y$ are dyadic numbers, then there exists an integer $k>0$ such that $x$ and $y$ belong to different $k$-dyadic intervals, and hence $\widehat{v}(x)=v_j(x)\leq v_j(y)=\widehat{v}(y)$ for every $j>k-k_0$.

At this point, we are left to prove (\ref{th:v_hat-v}). To this end we observe that (\ref{small-interval-invisible}) implies that
$$\psigpl\left(\frac{\delta}{2^k}, \left|v_j\left(\frac{i+1}{2^k}\right)-v_j\left(\frac{i}{2^k}\right)\right| \right)=0$$
for every $k$ sufficiently large and every $j\in\N$.

As a consequence, we deduce that for $\gamma<-p$ and for functions with values in $[A,B]$ the first sum in the definition (\ref{defn:DF}) of the dyadic functionals can be truncated at a fixed integer without affecting the value of the functional, and hence
$$\DFgpl(\delta,\widehat{v},[\alpha,\beta]) =\DFgpl(\delta,v_j,[\alpha,\beta]),$$
for every $j$ sufficiently large, so (\ref{th:v_hat-v}) follows from (\ref{v_j-decrease}).
\end{proof}

\subsubsection*{\centering Acknowledgments}

The author is a member of the \selectlanguage{italian} ``Gruppo Nazionale per l'Analisi Matematica, la Probabilità e le loro Applicazioni'' (GNAMPA) of the ``Istituto Nazionale di Alta Matematica'' (INdAM).

\selectlanguage{english}

\subsubsection*{\centering Final statements}

\paragraph{Conflict of interest} The author has no relevant financial or non-financial interests to disclose.



\paragraph{Data availability} No data were used for this research.

\paragraph{AI usage} No AI tool has been used for the research presented in this paper, nor for writing this manuscript.


\end{document}